\numberwithin{equation}{section}
\theoremstyle{plain}
\newtheorem{theorem}{Theorem}[section]
\newtheorem{corollary}[theorem]{Corollary}
\newtheorem{proposition}[theorem]{Proposition}
\newtheorem{lemma}[theorem]{Lemma}
\newtheorem{question}[theorem]{Question}
\newtheorem{lemma-definition}[theorem]{Lemma-Definition}
\theoremstyle{definition}
\newtheorem{definition}[theorem]{Definition}
\newtheorem{example}[theorem]{Example}
\theoremstyle{remark}
\newtheorem{remark}[theorem]{Remark}
\newcommand{\A}{{\mathbb A}}
\newcommand{\Aa}{{\mathfrak{A}}}
\newcommand{\Bb}{{\mathfrak{B}}}
\newcommand{\Kc}{{\mathfrak{K}}}
\newcommand{\Ll}{{\mathfrak{L}}}
\newcommand{\Cc}{{\mathfrak{C}}}
\newcommand{\Uu}{{\mathfrak{U}}}
\newcommand{\Hh}{\mathfrak{H}}
\newcommand\id{\operatorname{id}}
\newcommand\Fr{\operatorname{Fr}}
\newcommand\co{\operatorname{co}}
\newcommand\Hom{\operatorname{Hom}}
\newcommand\Rep{\operatorname{Rep}}
\newcommand\vect{\operatorname{vect}}
\newcommand\Vect{\operatorname{Vect}}
\begin{document}

\title[Jordan-H\" older theorem for finite dimensional Hopf algebras]{Jordan-H\"
older theorem for finite dimensional Hopf algebras}
\author{Sonia Natale}
\address{Facultad de Matem\'atica, Astronom\'\i a y F\'\i sica,
Universidad Nacional de C\'ordoba, CIEM -- CONICET, (5000) Ciudad
Universitaria, C\'ordoba, Argentina}
\email{natale@famaf.unc.edu.ar \newline \indent \emph{URL:}\/
http://www.famaf.unc.edu.ar/$\sim$natale}
\thanks{This work was partially supported by CONICET, Secyt (UNC) and the
Alexander von Humboldt Foundation}
\subjclass[2010]{16T05; 17B37}
\keywords{Hopf algebra; adjoint action; normal Hopf subalgebra; composition
series; principal series; isomorphism theorems; Jordan-H\" older theorem;
composition factor}
\date{\today}

\begin{abstract} We show that a Jordan-H\" older theorem holds for appropriately
defined composition
series of finite dimensional Hopf algebras.  This answers an open question of N.
Andruskiewitsch. In the course of our proof we establish analogues of the
Noether isomorphism theorems
of group theory for arbitrary Hopf algebras under certain faithful
(co)flatness assumptions. As an application, we prove an analogue of
Zassenhaus' butterfly lemma for finite dimensional Hopf algebras. We then use
these results to show that a Jordan-H\" older theorem holds as well for lower
and upper composition series, even though the factors of such series may be not
simple as Hopf algebras.
\end{abstract}

\maketitle

\section{Introduction}

Let $G$ be a group. Composition series of $G$, when they exist, provide a tool
to decompose the group $G$ into a collection of simple groups: the composition
factors of the series. Regarding this notion, a basic fundamental  result in
group theory is the Jordan-H\" older theorem, which asserts that the composition
factors of a group $G$ are in fact determined by  $G$,
independently of the choice of the composition series.

Let $k$ be a field.   Hopf algebra
extensions play an important r\^ ole in the classification problem of
finite dimensional Hopf algebras over $k$.
Indeed, suppose that $H$ is a finite dimensional Hopf algebra  over $k$ and $A$
is a normal  Hopf subalgebra, that is, a Hopf subalgebra stable under the
adjoint actions of $H$. Then the ideal $HA^+$, generated by the augmentation
ideal $A^+$ of $A$, is a Hopf ideal of $H$ and therefore $B = H/HA^+$
is a quotient Hopf algebra. In this way, $A$ gives rise canonically to an
exact sequence of Hopf algebras (see Section \ref{dos}):
$$k \longrightarrow A \longrightarrow H \longrightarrow B
\longrightarrow
k.$$  

As a consequence of the Nichols-Zoeller freeness theorem \cite{NZ}, any
exact sequence of finite dimensional Hopf algebras is \emph{cleft}, that is, it
admits a convolution invertible $B$-colinear and $A$-linear section
$B \to H$. This implies that $H$ can be recovered from $A$ and
$B$ plus some extra cohomological data. More precisely, $H$ isomorphic
as a Hopf algebra to a bicrossed product $A\# B$ with respect to 
suitable compatible data. See \cite{schneider-nb}, \cite{AD}.  

These considerations motivated the question of deciding if an analogue of the
Jordan-H\" older theorem of group theory does hold in the context of finite
dimensional Hopf
algebras, which was raised by N. Andruskiewitsch in \cite[Question
2.1]{andrus}. 

In this paper we show that this question has an affirmative answer. The 
following definition is proposed in \cite{AM}. Recall that 
a Hopf algebra is called \emph{simple} if it contains no proper nontrivial
normal Hopf subalgebra.

\begin{definition}\label{cs-invariant} A \emph{composition series} of $H$ is a
sequence of finite dimensional simple Hopf algebras $\Hh_1, \dots, \Hh_n$
defined recursively as follows: 

$\bullet$ If $H$ is simple, we let $n = 1$ and $\Hh_1 = H$.

$\bullet$ If $k \subsetneq A \subsetneq H$ is a normal Hopf subalgebra, and
$\Aa_1,
\dots, \Aa_m$, $\Bb_1, \dots,$ $\Bb_l$, are composition series of $A$ and $B =
H/HA^+$,
respectively, then we let $n = m+l$ and 
$$\Hh_i = \Aa_i, \quad \textrm{if } 1\leq i \leq m, \quad \Hh_i = \Bb_{i-m},
\quad \textrm{if } m < i \leq m+l.$$
The Hopf algebras $\Hh_1, \dots, \Hh_n$ will be called the \emph{factors} of
the series.   The number $n$ will be called the \emph{length} of the series.
\end{definition}

Every finite dimensional Hopf algebra admits a composition series. The next
theorem is one of the main results of the paper:

\begin{theorem}\label{jh-invariant} \emph{(Jordan-H\" older theorem for finite
dimensional Hopf algebras.)} 
Let  $\Hh_1, \dots, \Hh_n$ and $\Hh'_1, \dots, \Hh'_m$ be two composition series
of $H$.
Then there exists a bijection $f: \{1, \dots, n\} \to
\{1, \dots, m\}$ such that $\Hh_i \cong \Hh'_{f(i)}$ as Hopf algebras. 
\end{theorem}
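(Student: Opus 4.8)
The plan is to recast the theorem as a well-definedness statement and prove it by strong induction on $\dim H$. For a finite dimensional Hopf algebra $X$ write $\mathcal F(X)$ for the multiset of isomorphism classes of the factors of a composition series of $X$; I would prove by induction on $\dim X$ that $\mathcal F(X)$ is independent of all the choices made in the recursive Definition \ref{cs-invariant} (here $\cup$ denotes multiset union, so that repeated factors are counted with multiplicity). Theorem \ref{jh-invariant} then follows at once: two composition series of $H$ have equal factor multisets, hence equal length $n=m$ together with a factor-preserving bijection $f$. If $H$ is simple the recursion forces $n=1$ and $\Hh_1=H$, so there is nothing to prove; this is the base case.

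For the inductive step assume $H$ is not simple and that two composition series arise from normal Hopf subalgebras $k\subsetneq A\subsetneq H$ and $k\subsetneq A'\subsetneq H$, with quotients $B=H/HA^+$ and $B'=H/H{A'}^+$. By construction the two factor multisets are $\mathcal F(A)\cup\mathcal F(B)$ and $\mathcal F(A')\cup\mathcal F(B')$, and since $\dim A,\dim B,\dim A',\dim B'<\dim H$ each of these four multisets is already well defined by the inductive hypothesis. If $A=A'$ then $B=B'$ and the two agree, so I may assume $A\neq A'$.

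In the remaining case I would set $C=A\cap A'$ and $D=AA'$. The structural inputs, which I take from the Hopf-algebraic analogues of the Noether isomorphism theorems and of Zassenhaus' lemma established earlier (all faithful (co)flatness hypotheses being automatic here by the Nichols--Zoeller freeness theorem), are that $C$ is a normal Hopf subalgebra of both $A$ and $A'$, that $D$ is a normal Hopf subalgebra of $H$ containing $A$ and $A'$, and that there are isomorphisms of Hopf algebras
$$A/AC^+\ \cong\ D/D{A'}^+,\qquad A'/A'C^+\ \cong\ D/DA^+.$$
Building composition series through these normal Hopf subalgebras and using well-definedness of $\mathcal F$ for the smaller algebras $A$ and $A'$, I obtain $\mathcal F(A)=\mathcal F(C)\cup\mathcal F(A/AC^+)$ and $\mathcal F(A')=\mathcal F(C)\cup\mathcal F(A'/A'C^+)$. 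Since $A$ and $A'$ lie in $D$ and all three are normal in $H$, the third isomorphism theorem shows the image of $D$ is a normal Hopf subalgebra of $B$ and of $B'$ whose corresponding quotient is $H/HD^+$ in both cases, so that
$$\mathcal F(B)=\mathcal F(D/DA^+)\cup\mathcal F(H/HD^+),\qquad \mathcal F(B')=\mathcal F(D/D{A'}^+)\cup\mathcal F(H/HD^+).$$
Substituting these identities and applying the two isomorphisms above, both $\mathcal F(A)\cup\mathcal F(B)$ and $\mathcal F(A')\cup\mathcal F(B')$ reduce to the single multiset $\mathcal F(C)\cup\mathcal F(A/AC^+)\cup\mathcal F(A'/A'C^+)\cup\mathcal F(H/HD^+)$, completing the induction. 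The degenerate possibilities $C=k$, $C=A$, $D=A'$ or $D=H$ are harmless: each merely renders one quotient trivial and contributes the empty multiset.

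The main obstacle is not this combinatorial bookkeeping but the structural facts invoked above, namely that the intersection and the product of two normal Hopf subalgebras are again normal Hopf subalgebras, and that the second and third isomorphism theorems hold in the needed generality. In the Hopf setting these demand genuine work --- control of the adjoint stability of $C$ and $D$ and of the (co)flatness of the relevant extensions --- but they are precisely the content of the isomorphism theorems and of the butterfly lemma proved in the earlier sections, so at this stage I would simply cite them.
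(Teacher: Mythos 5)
Your proposal is correct and follows essentially the same strategy as the paper: induction on $\dim H$, with the second isomorphism theorem applied to $C=A\cap A'$ and $D=AA'$ supplying the key isomorphisms $A/AC^+\cong D/D{A'}^+$ and $A'/A'C^+\cong D/DA^+$, and the third isomorphism theorem handling the quotient side. The only organizational difference is that the paper first proves invariance under insertion of an intermediate normal Hopf subalgebra $A\subsetneq K\subsetneq H$ and then reduces to the case where $A$ and $A'$ are maximal (so that $AA'=H$ and the quotient $H/HD^+$ disappears), whereas you treat arbitrary $A\neq A'$ in a single pass by also splitting $\mathcal F(B)$ and $\mathcal F(B')$ through $H/HD^+$; both routes are valid and rest on the same isomorphism theorems.
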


Theorem \ref{jh-invariant} is proved in Section \ref{invariant}. 
Its proof relies on appropriate analogues
of the Noether isomorphism theorems of group theory that we establish, more
generally, in the context of arbitrary Hopf algebras under suitable faithful
(co)flatness
assumptions (Theorems \ref{1st-it}, \ref{2nd-hopf} and \ref{3d-it}).  

As a consequence of Theorem \ref{jh-invariant} the \emph{composition factors}
and the \emph{length} of
$H$, defined, respectively, as the factors and the length of any composition
series, are
well-defined invariants of $H$.
By definition, the composition factors of $H$ are simple Hopf
algebras. We prove some basic properties of these invariants, for instance, we
show that 
they are additive with respect to exact sequences of Hopf algebras, and
compatible with duality.

 We also apply the isomorphism theorems to prove an analogue of the
Jordan-H\" older theorem for lower and upper composition series of $H$; these
are lower (respectively, upper) subnormal series which do not
admit a proper refinement. See Definition \ref{def-lss} and Theorem \ref{jh}.
This
allows us to introduce the lower and upper composition factors of $H$ and its
lower and upper lengths, which are also well-defined invariants of $H$. We
remark that, in
contrast with the case of the composition factors, the lower or upper
composition factors are not necessarily simple as Hopf algebras (see Example
\ref{dual-group}). This motivates the question of deciding if there is an
intrinsic characterization of the Hopf algebras that can arise as lower
composition factors (see Question \ref{structure-factors}). Our
proof of Theorem \ref{jh}
follows the lines of the classical proof of the Jordan-H\" older theorem in
group theory. In particular, we prove analogues of the Zassenhaus' butterfly
lemma (Theorem \ref{butterfly}) and the Schreier's refinement theorem (Theorem
\ref{schreier}) for finite dimensional Hopf algebras.

We study some properties of lower and upper composition factors and their
relation with the composition factors. Unlike for the case of the length, the
lower and upper lengths are not additive with respect to exact sequences and
they are not invariant under duality in general. Nevertheless, we show that if
the
lower (respectively, upper) composition factors are simple Hopf algebras, then
they coincide, up to permutations, with the composition factors (see  
Proposition
\ref{simple-factors}).
We discuss some families of examples that include group algebras and their
duals, abelian extensions and Frobenius-Lusztig kernels.

We point out that neither the composition factors nor the upper or
lower composition factors of a finite dimensional Hopf algebra $H$ are
categorical invariants of $H$. In other words, they are not invariant under
twisting deformations of $H$. In fact, it was shown in \cite{GN} that there
exists a
(semisimple) Hopf
algebra such that $H$ is simple as a Hopf algebra and $H$ is twist equivalent to
the group algebra of a solvable group $G$. In particular, the categories of
finite dimensional representations of $H$ and $G$ are equivalent fusion
categories. Thus not even the length nor the lower or upper lengths are
invariant under twisting deformations. 
We discuss series of normal (right) coideal subalgebras, instead of normal Hopf
subalgebras, in Subsection \ref{ncs}. These kind of series are of a categorical
nature but, as it turns out, they fail
to enjoy a Jordan-H\" older theorem. 

The paper is organized as follows. In Section \ref{dos} we recall some
definitions and facts related to normality and exact sequences. In Section
\ref{tres} we prove the isomorphism theorems and discuss some consequences,
including the butterfly lemma. Theorem \ref{jh-invariant}
is proved in Section \ref{invariant}; several properties and examples of
composition series and its related invariants are also studied in this section.
In Section  \ref{cuatro} we study lower and upper composition series and give a
proof of Theorem \ref{jh}. 

Unless explicitly mentioned, $k$ will denote an arbitrary field. The
notation $\Hom$, $\otimes$, etc.,  will mean, respectively, $\Hom_k$,
$\otimes_k$, etc. 

\subsection*{Acknowledgement} The author thanks N. Andruskiewitsch for
suggesting Definition \ref{cs-invariant} and for his comments on a previous
version
of this paper. She also thanks the Humboldt Foundation, C. Schweigert and the
Mathematics Department of the University of Hamburg, where this work was done,
for the kind
hospitality.

\section{Preliminaries}\label{dos}

Let $H$ be a  Hopf algebra over $k$. In this section we recall some facts
about normal Hopf
subalgebras, normal Hopf algebra maps, and exact sequences of Hopf algebras. Our
references for
these topics are \cite{AD}, \cite{schneider}, \cite{takeuchi}. 

\subsection{Normal Hopf subalgebras}\label{nhs} The left and right adjoint
actions of $H$ on itself are
defined, respectively, in the form:
$$h. a = h_{(1)}a\mathcal S(h_{(2)}), \qquad a.h= \mathcal S(h_{(1)})ah_{(2)},
\qquad a, h \in H.$$

Let $K, L$ be subspaces of $H$. We shall say that $L$ \emph{right normalizes}
$K$ (respectively, \emph{left normalizes}
$K$) if $K$ is stable under the right (respectively, left) adjoint action of
$L$.
A subspace $K \subseteq H$ is
called  \emph{left normal} (respectively, right normal) if it is stable under
the left adjoint
action of $H$ (respectively, under the right adjoint action of $H$). 
$K$ is called \emph{normal} if it is stable under both adjoint actions.

Suppose $K$ is a Hopf subalgebra of $H$. If $H$ is left faithfully flat over
$K$, then $K$ is right normal if and only if
$K^+H \subseteq HK^+$; if $H$ is right faithfully flat over $K$, then $K$ is
left normal if only if $HK^+ \subseteq K^+H$ \cite[Theorem 4.4 (a)]{takeuchi}.

\begin{remark}\label{normal-ba} (i) Suppose $H$ has a
bijective antipode. If $K \subseteq H$ is a  Hopf
subalgebra, then $H$ is left faithfully flat over $K$ if and only if it is right
faithfully flat over $K$. In this case the antipode of $K$ is also bijective
\cite[Proposition 3.3
(b)]{takeuchi}.

 (ii) Suppose that $A, B$ are Hopf subalgebras of $H$
with bijective
antipodes (which is always the case if $H$ is finite dimensional).   Then $A$
right normalizes $B$ if and only if it left normalizes $B$. If this is the case,
then $AB = BA$.
In particular, if the antipode of $H$ is bijective and $K \subseteq H$ is a Hopf
subalgebra with bijective antipode, then $K$ is left normal in $H$ if and only
if it is right normal in $H$.
When $H$ is faithfully flat over $K$, these conditions are also equivalent
to  $HK^+ = K^+H$.
\end{remark}

\subsection{Normal Hopf algebra maps} The left and right
adjoint coactions of $H$ on itself are defined, respectively, as $h \mapsto
h_{(1)}\mathcal S(h_{(3)}) \otimes h_{(2)}$, and $h \mapsto
h_{(2)} \otimes h_{(1)}\mathcal S(h_{(3)})$, $h \in H$.

Let $\pi: H \to \overline H$ be a Hopf
algebra map. The map $\pi$ will be called \emph{left normal}  
(respectively, \emph{right normal}) 
if the kernel $I$ of $\pi$ is a subcomodule for the left (respectively, right)
adjoint coaction of $H$, and it will be called \emph{normal} if it is both left
and right normal.

Let ${}^{\co \pi}H$ and $H^{\co \pi}$ be the subalgebras of 
 $H$ defined, respectively, by
\begin{equation*}{}^{\co \pi}H  = \{ h \in   H:\, (\pi \otimes \id)\Delta(h) = 1
\otimes h\}, \quad 
H^{\co \pi}  = \{ h \in   H:\, (\id \otimes \pi)\Delta(h) = h \otimes
1\}.\end{equation*}
The subalgebra
${}^{\co \pi}H$ is always a right normal right coideal subalgebra of $H$.
Similarly, $H^{\co \pi}$ is a left normal left coideal subalgebra of $H$. 
When the Hopf algebra map $\pi$ is clear from the context, we shall also use the
notation ${}^{\co \overline H}H$ and $H^{\co \overline H}$ to indicate the
subalgebras ${}^{\co \pi}H$ and $H^{\co \pi}$, respectively.

If $H$ is left faithfully coflat over $\overline H$, then 
$\pi$ is right normal if and only if $H^{\co \pi} \subseteq {}^{\co \pi}H$, and
if $H$ is right faithfully coflat over $\overline H$, then $\pi$ is left normal
if and only if ${}^{\co \pi}H \subseteq H^{\co \pi}$.
If $\pi: H \to \overline H$ is a left normal Hopf algebra map, then ${}^{\co
\pi}H$ is a right normal Hopf subalgebra of $H$. See \cite[Section 4]{takeuchi}.

\begin{remark}\label{obs-ad} (i) Suppose that the antipode of $H$ is bijective.
Then $H$ is left faithfully coflat over  $\overline H$ if and only if it is
right faithfully coflat. In this case, 
$\pi$ is left normal   if and only if it is right normal, if and only if $H^{\co
\pi}
= {}^{\co \pi}H$. Moreover, the antipode
of $\overline H$ is also bijective \cite[Proposition 3.1 and Corollary 4.5
(b)]{takeuchi}.

(ii) Suppose $H$ has a bijective antipode.
Let $K$ be a normal Hopf subalgebra of $H$. It follows from \cite[Theorem
3.2]{takeuchi} that $H$ is faithfully flat over $K$ if and only if it is
faithfully coflat over $H/HK^+$. 
If this is the case, then the antipodes of $K$ and
$H/HK^+$ are also bijective.  
\end{remark}

\begin{remark}\label{suff-iii} (i) When $H$ is a commutative Hopf algebra, $H$
is
faithfully flat over any Hopf subalgebra \cite{takeuchi-72}.
If the coradical of $H$ is cocommutative,  then $H$ is (left and right)
faithfully coflat over any quotient $H$-module coalgebra and (left and right)
faithfully flat over any Hopf subalgebra \cite{masuoka-c}.
If $H$ is finite dimensional, the 
Nichols-Zoeller theorem \cite{NZ} implies that $H$ is free over any Hopf
subalgebra. 
Observe that a Hopf algebra $H$ in any of the above classes has
a bijective antipode.

(ii) Every Hopf algebra $H$ is left and right
faithfully flat (in fact free) over its finite dimensional normal Hopf
subalgebras and left and right faithfully coflat over its finite dimensional
normal quotient Hopf algebras \cite[2.1]{schneider}.
\end{remark}

\subsection{Exact sequences of Hopf algebras}

An \emph{exact sequence of Hopf algebras} is a sequence of Hopf algebra maps 
\begin{equation}\label{exacta} k \longrightarrow H' \overset{i}\longrightarrow H
\overset{\pi}\longrightarrow H'' \longrightarrow k,
\end{equation}
satisfying the following conditions:
$$\textrm{(a) } i \textrm{ is injective and } \pi \textrm{  is surjective
},\quad
\textrm{(b) } \ker \pi = Hi(H')^+, \quad
\textrm{(c) } i(H') = {}^{\co \pi}\!H.$$
Note that either (b) or (c) imply that $\pi i = \epsilon 1$. If $H$ is
faithfully
flat over  $H'$, then (a) and (b) imply (c).  Dually, if $H$ is
faithfully coflat over $H''$, then (a) and (c)
imply (b).

Let $K \subseteq H$ be a normal Hopf subalgebra. Then $HK^+ = K^+H$
is a Hopf ideal of $H$ and the canonical map $H \to H/HK^+$ is a Hopf algebra
map. Hence, if $H$ is faithfully flat over $K$, then there is
an exact sequence of Hopf algebras $k \longrightarrow K \longrightarrow H
\longrightarrow H/HK^+ \longrightarrow
k$. Similarly, if $\pi: H \to \overline H$ is a normal quotient Hopf
algebra, then ${}^{\co \pi}H = H^{\co \pi}$ is a Hopf subalgebra and if $H$ is
faithfully coflat over $\overline H$,
there is an exact sequence of Hopf algebras $k \longrightarrow {}^{\co \pi}H
\longrightarrow H \longrightarrow \overline H \longrightarrow k$.

 Assume that $H$ is finite dimensional. Then any exact sequence \eqref{exacta}
is cleft. 
In particular,  $H \cong H' \otimes H''$ as left
$H'$-modules and right $H''$-comodules \cite{schneider-nb}, and therefore $\dim
H = \dim H' \dim H''$.

Observe that $H$ is simple if and only
if it admits no proper normal quotient Hopf algebra.
Furthermore, a sequence of Hopf algebra maps $k \longrightarrow H'
\overset{i}\longrightarrow H
\overset{\pi}\longrightarrow H'' \longrightarrow k$ is an exact sequence if and
only if 
$k \longrightarrow (H'')^* \overset{\pi^*}\longrightarrow H^*
\overset{i^*}\longrightarrow (H')^* \longrightarrow k$ is an exact sequence.
Therefore $H$ is simple if and only if $H^*$ is simple. 

\begin{remark}\label{ss-coss} Let \eqref{exacta} be an exact sequence of finite
dimensional
Hopf algebras. By cleftness,  $H$ is isomorphic to a bicrossed
product $H'\# H''$.
This implies that $H$ is semisimple if and only if $H'$ and $H''$ are
semisimple. Indeed, if $H$ is semisimple then every quotient Hopf algebra and
every Hopf subalgebra of $H$ are semisimple; for the converse, we use that $H$
is isomorphic as an algebra to a crossed product and  \cite[Theorem 2.6]{BM}. 
Dualizing the exact sequence \eqref{exacta} we also get that
$H$ is cosemisimple if and only if $H'$ and $H''$ are cosemisimple.
\end{remark}

\section{Isomorphism theorems for Hopf algebras}\label{tres}

In this section we prove analogues of the Noether isomorphism theorems. 
We refer the reader to \cite{masuoka-qt}, \cite{takeuchi}, for a detailed
exposition on the quotient theory of Hopf algebras.
Our proofs rely on the following result of M. Takeuchi:

\begin{theorem}\label{ker-copi-1}\emph{(\cite[Theorems 1 and 2]{takeuchi-HM}.)}
Let $H$ be a Hopf algebra. Let $K$ be a right coideal subalgebra of $H$ and let 
$\pi: H \to \overline H$ be a quotient left $H$-module coalgebra.   Then the
following hold:

(i) If $H$ is left faithfully flat over $K$, then $H$ is left faithfully
coflat over $H/HK^+$ and
$K = {}^{\co H/HK^+\!}\!H$.

(ii) If
$H$ is right faithfully coflat over $\overline H$, then $H$ is right 
faithfully flat over $^{\co \pi}H$ and $\ker \pi = H(^{\co
\pi}H)^+$. \qed 
\end{theorem}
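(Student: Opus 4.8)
The two items are dual halves of a single Galois-type correspondence: together they assert that $K \mapsto H/HK^+$ and $\pi \mapsto {}^{\co\pi}H$ are mutually inverse bijections between the right coideal subalgebras over which $H$ is left faithfully flat and the quotient left $H$-module coalgebras over which $H$ is right faithfully coflat. My plan is to prove (i) in full and to obtain (ii) by the order-reversing dual argument (subalgebra $\leftrightarrow$ quotient coalgebra, $\otimes_K \leftrightarrow \square$, flat $\leftrightarrow$ coflat). In each case the easy half is a direct computation, while essentially the entire weight of the statement lies in a faithfully flat (respectively coflat) descent argument.

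First the routine reductions. Writing $C = H/HK^+$ and $\pi\colon H \to C$ for the projection, a short check shows that $HK^+$ is simultaneously a left ideal and a coideal, so $C$ is indeed a quotient left $H$-module coalgebra and $H$ is a left $C$-comodule via $\lambda = (\pi \otimes \id)\Delta$. The inclusion $K \subseteq {}^{\co C}H$ is then immediate: for $k \in K$ one has $\Delta(k) \in K \otimes H$ and $\pi$ kills $K^+$, whence $\lambda(k) = \bar 1 \otimes k$. Dually, for (ii) one applies $\id \otimes \epsilon$ to the coinvariance condition to get $({}^{\co\pi}H)^+ \subseteq \ker\pi$, and since $\ker\pi$ is a left ideal this yields $H({}^{\co\pi}H)^+ \subseteq \ker\pi$. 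It is also elementary that $K$ and ${}^{\co C}H$ determine the same ideal, i.e. $HK^+ = H({}^{\co C}H)^+$, so both define the same quotient $C$. Thus for (i) what remains is the reverse inclusion ${}^{\co C}H \subseteq K$ together with the faithful coflatness of $H$ over $C$.

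The engine is the structure theory of relative Hopf modules. I would work in the category ${}^C_H\M$ of left $H$-modules $M$ carrying a compatible left $C$-comodule structure $\lambda_M$, with the adjoint pair given by the coinvariants functor $M \mapsto {}^{\co C}M := \{m \in M : \lambda_M(m) = \bar 1 \otimes m\}$, valued in left $K$-modules, and the induction functor $N \mapsto H \otimes_K N$ equipped with the coaction $\lambda \otimes \id_N$. The key point is that left faithful flatness of $H$ over $K$ activates the faithfully flat descent machinery: it forces the canonical comparison morphisms of this adjunction to be isomorphisms, since they become isomorphisms after applying the faithfully exact functor $H \otimes_K(-)$ and were therefore isomorphisms already. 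This simultaneously shows that coinvariants and induction are mutually inverse equivalences and, evaluated at the regular object $H \in {}^C_H\M$, pins down its coinvariants by yielding the reverse inclusion ${}^{\co C}H \subseteq K$, hence ${}^{\co C}H = K$. The faithful coflatness of $H$ over $C$ emerges from the same analysis, as the cotensor functor $-\,\square_C H$ inherits exactness and faithfulness from $H \otimes_K(-)$ through these comparison isomorphisms.

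The dual argument proves (ii): starting from right faithful coflatness of $H$ over $\overline H$, one replaces tensor products over $K$ by cotensor products over $\overline H$ and runs the dual descent for the relative Hopf modules built from the right coideal subalgebra ${}^{\co\pi}H$, obtaining that $H$ is right faithfully flat over ${}^{\co\pi}H$ and that $\ker\pi = H({}^{\co\pi}H)^+$. The main obstacle throughout is exactly this descent step and, inside it, the exchange between faithful flatness and faithful coflatness: this has no purely formal shortcut and is the genuinely original content of Takeuchi's theorem, resting on the structure theorem for (co)modules rather than on the formal manipulations that suffice for the easy inclusions.
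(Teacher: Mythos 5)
This theorem is not proved in the paper at all: it is quoted verbatim from Takeuchi (\cite[Theorems 1 and 2]{takeuchi-HM}) and used as the black box on which the isomorphism theorems rest, so there is no internal argument to compare yours against. Your outline does follow the strategy of the cited source: the inclusions $K \subseteq {}^{\co H/HK^+}H$ and $H({}^{\co \pi}H)^+ \subseteq \ker\pi$ by direct computation, and the reverse inclusions together with the faithful (co)flatness assertions via the adjunction between induction $N \mapsto H\otimes_K N$ and coinvariants on relative Hopf modules, with (ii) obtained by dualizing. So the architecture is right and consistent with what the paper is actually invoking.

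However, as a proof the proposal has a genuine gap at precisely the step you yourself identify as carrying all the weight. The claim that the unit and counit of the induction/coinvariants adjunction ``become isomorphisms after applying the faithfully exact functor $H\otimes_K(-)$ and were therefore isomorphisms already'' is not a formal consequence of faithful flatness. After applying $H\otimes_K(-)$ one must still identify $H\otimes_K\, {}^{\co C}M$ with a cotensor product and, crucially, prove that the canonical Galois-type map $H\otimes_K H \to H\,\square_C\,(C\otimes H)$ (equivalently, the map $h\otimes h' \mapsto \pi(h_{(1)})\otimes h_{(2)}h'$ onto the appropriate cotensor product) is bijective under the hypothesis of left faithful flatness over $K$. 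That bijectivity is the actual content of Takeuchi's Theorems 1 and 2; without it the phrase ``were therefore isomorphisms already'' is circular, and the final transfer from exactness of $H\otimes_K(-)$ to faithful coflatness of $-\,\square_C H$ (which needs $\square_C$ to be identified with $\otimes_K$ under the asserted equivalence) rests on the same unproved isomorphism. In short: correct skeleton, same route as the source the paper cites, but the descent engine is named rather than built, so the hard half of both (i) and (ii) remains unproved.
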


\subsection{First isomorphism theorem}\label{1st-iso} In view of
\cite[Proposition 1.4 (a)]{takeuchi}, if $K \subseteq H$ is a right normal right
coideal subalgebra, then $HK^+$ is a Hopf ideal of $H$. Therefore the quotient
$H/HK^+$ is a Hopf algebra
and the canonical map $\pi_K: H \to H/HK^+$ is a Hopf algebra map. Theorem
\ref{ker-copi-1} can be regarded as an analogue of the first isomorphism theorem
of group theory:

\begin{theorem}\label{1st-it}\emph{(First isomorphism theorem for Hopf
algebras.)} Let $H$ be a Hopf algebra and let $\pi: H \to \overline H$ be a
surjective
 Hopf algebra map. Suppose that $H$ is right $\overline H$-faithfully
coflat. Then $\pi$ induces an isomorphism of Hopf algebras
$H/HK^+ \cong \overline H$,
where $K = {}^{\co \pi}H$. \qed 
\end{theorem}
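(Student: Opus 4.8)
The plan is to reduce the statement to Takeuchi's Theorem~\ref{ker-copi-1}(ii): the entire content of the first isomorphism theorem is the identification of $\ker \pi$ with the Hopf ideal $HK^+$, after which the desired isomorphism is obtained by the usual factorization through the quotient. So the strategy is first to produce the candidate quotient map, then to pin down $\ker\pi$, and finally to check that the induced map is a bijective Hopf algebra map.

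Concretely, I would set $K = {}^{\co \pi}H$. Since ${}^{\co \pi}H$ is always a right normal right coideal subalgebra of $H$, the discussion in Subsection~\ref{1st-iso} shows that $HK^+$ is a Hopf ideal of $H$, so that $H/HK^+$ is a Hopf algebra and the canonical projection $\pi_K\colon H \to H/HK^+$ is a surjective Hopf algebra map. Next, invoking the standing hypothesis that $H$ is right faithfully coflat over $\overline H$, Theorem~\ref{ker-copi-1}(ii) applies to $\pi$ and yields
$$\ker \pi = H\,({}^{\co \pi}H)^+ = HK^+.$$
Thus $\pi$ and $\pi_K$ are two surjective Hopf algebra maps out of $H$ sharing the same kernel $HK^+ = \ker \pi$.

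It then remains to factor $\pi$ through $\pi_K$. By the universal property of the quotient $H/HK^+$, the map $\pi$ descends to a Hopf algebra map $\overline\pi\colon H/HK^+ \to \overline H$ with $\overline\pi \circ \pi_K = \pi$; this $\overline\pi$ is surjective because $\pi$ is, and it is injective because $\overline\pi(h + HK^+) = 0$ forces $\pi(h)=0$, that is, $h \in \ker \pi = HK^+$. Hence $\overline\pi$ is the asserted isomorphism of Hopf algebras $H/HK^+ \cong \overline H$.

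I do not expect a genuine obstacle here: once the faithful coflatness hypothesis is in place, the equality $\ker \pi = HK^+$ is precisely what Takeuchi's result delivers, and the remaining factorization is formal. The only point requiring a little care is to verify that the induced bijection $\overline\pi$ respects all of the Hopf structure, namely comultiplication, counit, and antipode; this follows automatically, since both $\pi$ and $\pi_K$ are Hopf algebra maps and $\pi_K$ is a (surjective) Hopf algebra quotient, so $\overline\pi$ inherits compatibility with every structure map.
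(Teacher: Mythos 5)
Your proposal is correct and follows exactly the route the paper intends: the theorem is stated with an immediate \qed precisely because, once ${}^{\co \pi}H$ is known to be a right normal right coideal subalgebra (so that $HK^+$ is a Hopf ideal) and Theorem \ref{ker-copi-1}(ii) gives $\ker\pi = H({}^{\co\pi}H)^+$, the isomorphism is the formal factorization you describe. Nothing to add.
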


As a consequence of Theorem \ref{ker-copi-1} we also obtain:
\begin{corollary}\label{p-thm}  Let $K \subseteq H$ be a right
normal right coideal subalgebra of $H$ and let $\pi: H \to \overline H$ be a
Hopf algebra map. Assume that $H$ is left faithfully flat over  $K$  and right
faithfully coflat over $\overline H$.
Then the following assertions are equivalent:

(i) $K \subseteq {}^{\co \pi}H$.

(ii) There is a unique Hopf algebra map $\overline\pi: H/HK^+ \to
\overline H$ such that $\pi = \overline \pi \pi_K$. \end{corollary}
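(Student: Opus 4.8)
The plan is to prove the two implications separately, in each case reducing to the structural results of Takeuchi recorded in Theorem \ref{ker-copi-1}; notably, the two faithful (co)flatness hypotheses enter the two directions asymmetrically, so part of the bookkeeping is to keep track of which assumption powers which step.

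For the implication (ii) $\Rightarrow$ (i) I would start from a factorization $\pi = \overline\pi\,\pi_K$ and use that $H$ is left faithfully flat over $K$. By Theorem \ref{ker-copi-1}(i) this flatness yields the identification $K = {}^{\co \pi_K}H$, so every $k \in K$ satisfies $(\pi_K \otimes \id)\Delta(k) = 1 \otimes k$ in $(H/HK^+) \otimes H$. Applying $\overline\pi \otimes \id$ and using that $\overline\pi$ is a unital algebra map, I obtain $(\pi \otimes \id)\Delta(k) = (\overline\pi \otimes \id)(1 \otimes k) = 1 \otimes k$, that is, $k \in {}^{\co \pi}H$. Hence $K \subseteq {}^{\co \pi}H$. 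This direction is essentially immediate once the coinvariant description of $K$ is in hand.

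For (i) $\Rightarrow$ (ii) the task is to produce the factoring map, and here I would invoke the right faithful coflatness of $H$ over $\overline H$. By Theorem \ref{ker-copi-1}(ii) this gives $\ker \pi = H({}^{\co \pi}H)^+$. From the inclusion $K \subseteq {}^{\co \pi}H$ one gets $K^+ \subseteq ({}^{\co \pi}H)^+$, and therefore $HK^+ \subseteq H({}^{\co \pi}H)^+ = \ker \pi$. Since $\pi_K$ is the canonical surjection with kernel exactly $HK^+$, the containment $\ker \pi_K = HK^+ \subseteq \ker \pi$ produces a unique linear map $\overline\pi \colon H/HK^+ \to \overline H$ with $\pi = \overline\pi\,\pi_K$, uniqueness being forced by the surjectivity of $\pi_K$.

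The one point that requires a little care, and what I regard as the main (if modest) obstacle, is verifying that the factored map $\overline\pi$ is a Hopf algebra map and not merely linear. I would deduce this from the fact that $\pi_K$ is a surjective Hopf algebra map while $\pi$ is a Hopf algebra map: surjectivity of $\pi_K$ lets one transport the (co)multiplicativity, counit, and antipode identities from $\pi$ down to $\overline\pi$, because any two maps out of $H/HK^+$ that agree after precomposition with $\pi_K$ must coincide. The remaining groundwork, namely that $HK^+$ is a Hopf ideal so that $\pi_K$ is defined and $H/HK^+$ is a Hopf algebra, is already supplied by the right normality of the right coideal subalgebra $K$ via the discussion preceding Theorem \ref{1st-it}.
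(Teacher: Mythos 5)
Your proof is correct and follows essentially the same route as the paper: (i)$\Rightarrow$(ii) via Theorem \ref{ker-copi-1}(ii) giving $\ker\pi = H({}^{\co\pi}H)^+ \supseteq HK^+$ and then factoring through $\pi_K$, and (ii)$\Rightarrow$(i) via Theorem \ref{ker-copi-1}(i) giving $K = {}^{\co\pi_K}H \subseteq {}^{\co\pi}H$. You merely spell out in more detail the two steps the paper leaves implicit (applying $\overline\pi\otimes\id$ to the coinvariance relation, and transporting the Hopf algebra map axioms along the surjection $\pi_K$).
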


\begin{proof} Assume (i). By Theorem \ref{ker-copi-1} (ii), we have that $\ker
\pi = H(^{\co \pi}H)^+$. Therefore $HK^+ \subseteq \ker \pi$ and there
is a unique algebra map $\overline \pi: H/HK^+ \to \overline H$ that
makes the diagram in (ii) commute. Since $\pi$ is a Hopf algebra map, then so
is $\overline \pi$. Hence (ii) holds.
Conversely, assume (ii). By  Theorem \ref{ker-copi-1} (i),  $K = {}^{\co
H/HK^+}H$. The relation $\pi = \overline \pi \pi_K$ implies $K = {}^{\co
H/HK^+}H
\subseteq {}^{\co \pi}H$. Thus (i) holds.
\end{proof}

\subsection{Second isomorphism theorem} A version of the next theorem
was shown, in a finite dimensional context, in \cite[(3.3.8)]{notes-ext}.

\begin{theorem}\label{2nd-hopf}\emph{(Second isomorphism theorem for Hopf
algebras.)} Let $H$ be a Hopf algebra and let $A, B$
be  Hopf subalgebras of $H$ such that $A$ right normalizes $B$. Then the
following hold:

(i) The product $AB$ is a Hopf subalgebra of $H$.

(ii) $A(A\cap B)^+$ is a Hopf ideal of $A$ and $AB^+$ is a
Hopf ideal of $AB$.

(iii) Suppose that $AB$ is left faithfully flat over $B$ and $A$
is right faithfully coflat over $AB/AB^+$. Then 
the inclusion $A \to
AB$ induces canonically an isomorphism of Hopf algebras
$A/A(A\cap B)^+ \cong AB/AB^+$.
\end{theorem}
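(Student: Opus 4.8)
The plan is to transcribe the classical proof of the second isomorphism theorem, using the First isomorphism theorem (Theorem \ref{1st-it}) and Takeuchi's Theorem \ref{ker-copi-1} as the Hopf-algebraic substitutes for the group-theoretic quotient constructions; I would prove the three parts in order, since each relies on the previous one. For (i) the essential point is to show $BA\subseteq AB$, after which closure of $AB$ under $\Delta$, $\mathcal S$ and containment of $1$ are immediate from $A,B$ being Hopf subalgebras. To get $BA\subseteq AB$ I would use, for $a\in A$ and $b\in B$, the identity
$$ba = \sum a_{(1)}\,\mathcal S(a_{(2)})\,b\,a_{(3)} = \sum a_{(1)}\,(b\,.\,a_{(2)}),$$
where $b\,.\,a_{(2)}=\mathcal S(a_{(2)})b\,a_{(3)}$ is the right adjoint action. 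Since $A$ right normalizes $B$ and $\Delta(A)\subseteq A\otimes A$, every factor $b\,.\,a_{(2)}$ lies in $B$ and every $a_{(1)}$ lies in $A$, so $ba\in AB$; note that this argument needs no bijectivity of the antipode.

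For (ii) I would first observe that $B$ is a right normal right coideal subalgebra of $AB$: the right adjoint action is a right action, so for $a\in A$ and $b,b'\in B$ one has $b'\,.\,(ab)=(b'\,.\,a)\,.\,b\in B$, using right normality of $A$ on $B$ to put $b'\,.\,a\in B$ and stability of $B$ under its own adjoint action. Because $(AB)B^+=AB^+$, the result \cite[Proposition 1.4 (a)]{takeuchi} then shows that $AB^+$ is a Hopf ideal of $AB$. Analogously, $A\cap B$ is a Hopf subalgebra of $A$ which is right normal in $A$ — for $c\in A\cap B$ and $a\in A$ the element $c\,.\,a$ lies in $A$ (as $A$ is a Hopf subalgebra) and in $B$ (as $A$ right normalizes $B$) — so the same result gives that $A(A\cap B)^+$ is a Hopf ideal of $A$.

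For (iii), the heart of the matter, I would factor the inclusion as a composite of Hopf algebra maps
$$\overline\iota\colon A\hookrightarrow AB\overset{\pi}\longrightarrow AB/AB^+,$$
where $\pi$ is the canonical projection, and apply Theorem \ref{1st-it} to $\overline\iota$. Three checks are needed. First, $\overline\iota$ is surjective: since $\pi(b)=\epsilon(b)1$ for $b\in B$, any element $\pi(\sum_i a_ib_i)=\overline\iota(\sum_i\epsilon(b_i)a_i)$ lies in the image of $\overline\iota$, and $\pi$ is onto. Second, using the hypothesis that $AB$ is left faithfully flat over $B$, Theorem \ref{ker-copi-1} (i) identifies $B={}^{\co\pi}(AB)$; unwinding the definition of the coinvariants and using $\overline\iota=\pi\iota$ then yields ${}^{\co\overline\iota}A=A\cap{}^{\co\pi}(AB)=A\cap B$. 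Third, $A$ is right faithfully coflat over $AB/AB^+$ by hypothesis. Theorem \ref{1st-it}, applied with $K={}^{\co\overline\iota}A=A\cap B$, then produces the desired isomorphism $A/A(A\cap B)^+\cong AB/AB^+$ induced by the inclusion.

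The main obstacle I anticipate is the bookkeeping in (iii): correctly identifying the coinvariant subalgebra ${}^{\co\overline\iota}A$ with $A\cap B$ — which is exactly where the left faithful flatness of $AB$ over $B$ enters, through Takeuchi's theorem — and verifying that $\overline\iota$ satisfies the hypotheses of Theorem \ref{1st-it}, so that the correct quotient $A/A(A\cap B)^+$, and not some a priori larger one, is produced. Everything else is a formal transcription of the group-theoretic argument.
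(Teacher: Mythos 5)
Your proposal is correct and follows essentially the same route as the paper: the same identity $ba=\sum a_{(1)}\mathcal S(a_{(2)})ba_{(3)}$ for (i), the same right-normality observations plus \cite[Proposition 1.4 (a)]{takeuchi} for (ii), and for (iii) the same composite $A\hookrightarrow AB\to AB/AB^+$ with surjectivity, the identification ${}^{\co}A=A\cap{}^{\co}(AB)=A\cap B$ via Theorem \ref{ker-copi-1} (i), and the conclusion via the first isomorphism theorem. The only cosmetic difference is that you invoke Theorem \ref{1st-it} where the paper applies Theorem \ref{ker-copi-1} (ii) directly, which amounts to the same thing.
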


\begin{proof} (i). It is clear that $AB$ is a subcoalgebra of $H$. Since $A$
right normalizes $B$, then, for all $b \in B$, $a\in A$, we have $ba =
a_{(1)}\mathcal S(a_{(2)})ba_{(3)} \in AB$. Thus $BA \subseteq AB$. Therefore
$AB$ is a subalgebra, hence a subbialgebra, of $H$. In addition, $\mathcal S(AB)
= \mathcal S(B) \mathcal S(A) \subseteq BA \subseteq AB$. Then $AB$ is a Hopf
subalgebra. 

 (ii). Since $A$ right normalizes $B$, then $A\cap B$ is a right normal
Hopf subalgebra of $A$ and $B$ is a right normal Hopf subalgebra of $AB$. Hence
$A(A\cap B)^+$ is a Hopf ideal of $A$ and $AB^+ = ABB^+$ is a Hopf
ideal of $AB$ \cite[Proposition 1.4 (a)]{takeuchi}.

 (iii). It follows from (ii) that $A/A(A\cap B)^+$ is a quotient Hopf
algebra of
$A$ and $AB/AB^+$ is a quotient Hopf algebra of $AB$. The composition of the
inclusion $A \to AB$ with the
canonical projection $\pi': AB \to AB/AB^+$ induces a Hopf algebra map $\pi':
A\to AB/AB^+$. Note that $\pi'$ is surjective; indeed,  for all $a \in A$, $b
\in B$, we
have that $\pi'(ab) = \pi'(a)\pi'(b) = \pi'(a)\epsilon(b) \in \pi'(A)$. Hence
$AB/AB^+ = \pi'(AB) = \pi'(A)$.

Since $AB$ is left faithfully flat over $B$,  Theorem
\ref{ker-copi-1} (i) implies that $AB$ is left faithfully coflat over $AB/AB^+$
and ${}^{\co \pi'}(AB) = {}^{\co AB/AB^+}\!(AB) = B$.

By assumption, $A$ is right faithfully coflat over $AB/AB^+$. Then Theorem
\ref{ker-copi-1} (ii) implies that $A$ is right faithfully flat over ${}^{\co
\pi'}\!A$ and the kernel of $\pi': A \to AB/AB^+$ coincides with $A (^{\co
\pi'}\!A)^+$.  Note that  ${}^{\co \pi'}\!A = A \cap {}^{\co \pi'}\!(AB) = A
\cap B$. 
From this we deduce that the kernel of $\pi': A \to AB/AB^+$ is $A(A\cap
B)^+$. Hence $\pi'$ induces an
isomorphism of Hopf algebras $A/A(A\cap B)^+ \to AB/AB^+$, as claimed.
\end{proof}

\begin{corollary} \label{dim-ab} Let $A, B$ be finite dimensional Hopf
subalgebras
of $H$ such that $A$ right normalizes $B$. Then $AB$ is a finite dimensional
Hopf subalgebra of $H$ and we have $\dim AB = \dfrac{\dim A \dim B}{\dim A \cap
B}$.
\end{corollary}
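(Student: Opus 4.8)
Let me plan a proof of the dimension formula $\dim AB = \dfrac{\dim A \dim B}{\dim A\cap B}$.

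The plan is to deduce everything from the Second Isomorphism Theorem (Theorem \ref{2nd-hopf}) together with the multiplicativity of dimensions in cleft exact sequences recorded in Section \ref{dos}. First I would observe that $AB$ is a finite dimensional Hopf subalgebra: by Theorem \ref{2nd-hopf}(i) it is a Hopf subalgebra of $H$, and since it is spanned by the products of elements from two finite dimensional spaces, $\dim AB \leq \dim A\,\dim B < \infty$. Now I would invoke the isomorphism of Theorem \ref{2nd-hopf}(iii), namely $A/A(A\cap B)^+ \cong AB/AB^+$. The point is that the faithful (co)flatness hypotheses of part (iii) are automatic here: by the Nichols–Zoeller theorem (Remark \ref{suff-iii}(i), or the freeness over Hopf subalgebras quoted from \cite{NZ}), every finite dimensional Hopf algebra is free — hence faithfully flat — over any Hopf subalgebra, and finite dimensional Hopf algebras are likewise faithfully coflat over their quotient Hopf algebras. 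In particular $AB$ is left faithfully flat over $B$ and $A$ is right faithfully coflat over the finite dimensional quotient $AB/AB^+$, so part (iii) applies unconditionally.

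Next I would turn each side of that isomorphism into an exact sequence and read off dimensions. On the one hand, $B$ is a normal Hopf subalgebra of $AB$ with $AB$ faithfully flat over $B$, so there is an exact sequence $k \to B \to AB \to AB/AB^+ \to k$; by cleftness of exact sequences of finite dimensional Hopf algebras this gives $\dim AB = \dim B \cdot \dim(AB/AB^+)$. On the other hand, $A\cap B$ is a normal Hopf subalgebra of $A$ with $A$ faithfully flat over it, yielding $k \to A\cap B \to A \to A/A(A\cap B)^+ \to k$ and hence $\dim A = \dim(A\cap B)\cdot \dim\big(A/A(A\cap B)^+\big)$. The isomorphism from Theorem \ref{2nd-hopf}(iii) identifies the two quotient factors, so $\dim(AB/AB^+) = \dim\big(A/A(A\cap B)^+\big) = \dim A / \dim(A\cap B)$.

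Finally I would combine the two displayed equalities: substituting the value of $\dim(AB/AB^+)$ into $\dim AB = \dim B \cdot \dim(AB/AB^+)$ gives
\[
\dim AB = \dim B \cdot \frac{\dim A}{\dim A\cap B} = \frac{\dim A\,\dim B}{\dim A\cap B},
\]
as claimed. I do not anticipate a genuine obstacle, since the heavy lifting — the structural isomorphism and the verification that its hypotheses hold — is already handled by Theorem \ref{2nd-hopf} and the Nichols–Zoeller freeness theorem; the only point requiring a moment's care is checking that both normality conditions (of $B$ in $AB$ and of $A\cap B$ in $A$) and the associated faithful flatness are in force, but these are exactly what was established in the proof of Theorem \ref{2nd-hopf}(ii) and what Remark \ref{suff-iii}(i) guarantees in the finite dimensional setting.
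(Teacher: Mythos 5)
Your proposal is correct and follows essentially the same route as the paper: both invoke Theorem \ref{2nd-hopf}(iii) to identify $A/A(A\cap B)^+$ with $AB/AB^+$, note that the faithful (co)flatness hypotheses are automatic in the finite dimensional setting by Nichols--Zoeller (Remark \ref{suff-iii}), and then read off $\dim AB/AB^+ = \dim AB/\dim B$ and $\dim A/A(A\cap B)^+ = \dim A/\dim(A\cap B)$ from the freeness of the corresponding extensions. Your version merely makes explicit, via the two cleft exact sequences, the dimension counts that the paper states directly.
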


\begin{proof}By Theorem \ref{2nd-hopf} (iii), there is an
isomorphism of Hopf algebras $A/A(A\cap B)^+ \cong
AB/AB^+$ (c.f. Remark \ref{suff-iii}).  
This implies the corollary, since $\dim A/A(A\cap B)^+ = \dfrac{\dim A}{\dim
A\cap B}$ and $\dim AB/AB^+ = \dfrac{\dim AB}{\dim B}$.\end{proof}

\subsection{Third isomorphism theorem} A version of the following theorem was
established, under certain finiteness 
conditions, in \cite[(3.3.7)]{notes-ext}.
Recall that if $K\subseteq H$, then $\pi_K:H\to H/HK^+$ denotes 
the canonical map. 

\begin{theorem}\label{3d-it}\emph{(Third isomorphism theorem for Hopf
algebras.)} Let $A$ be a right normal Hopf subalgebra of $H$ and let $B$ be a
right normal right coideal subalgebra of $H$ such that $B \subseteq A$. 
Then $\pi_B(A)$ is a right normal Hopf subalgebra of $H/HB^+$
and the following hold:

(i) $\pi_A$ induces an isomorphism of Hopf algebras
$\dfrac{H/HB^+}{(H/HB^+) \pi_B(A)^+} \cong H/HA^+$.

(ii) Assume that $H$ is left faithfully flat over $B$ and $A$ is right
faithfully coflat over $\pi_B(A)$. Then $\pi_B$ induces an isomorphism of Hopf
algebras  $A/AB^+ \cong \pi_B(A)$. 
\end{theorem}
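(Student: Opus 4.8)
The plan is to deduce all three claims from Takeuchi's theorem (Theorem~\ref{ker-copi-1}) and the first isomorphism theorem (Theorem~\ref{1st-it}), exploiting throughout that the inclusion $B\subseteq A$ yields $B^+\subseteq A^+$ and hence $HB^+\subseteq HA^+$. First I would verify the preliminary assertion that $\pi_B(A)$ is a right normal Hopf subalgebra of $H/HB^+$. Since $\pi_B$ is a surjective Hopf algebra map and $A$ is a Hopf subalgebra, $\pi_B(A)$ is automatically a Hopf subalgebra of $H/HB^+$; right normality follows from a direct computation with the right adjoint action, namely $\pi_B(a).\pi_B(h)=\pi_B(\mathcal S(h_{(1)})ah_{(2)})=\pi_B(a.h)\in\pi_B(A)$ for $a\in A$, $h\in H$, using that $a.h\in A$ because $A$ is right normal in $H$. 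By \cite[Proposition 1.4 (a)]{takeuchi} this ensures that $(H/HB^+)\pi_B(A)^+$ is a Hopf ideal, so both quotients in the statement are meaningful.

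For (i), the essential point is $HB^+\subseteq HA^+$, so $\pi_A$ factors through $\pi_B$: there is a surjective Hopf algebra map $\psi\colon H/HB^+\to H/HA^+$ with $\psi\pi_B=\pi_A$. I would then compute its kernel using the surjectivity of $\pi_B$, obtaining $\ker\psi=\pi_B(\ker\pi_A)=\pi_B(HA^+)=\pi_B(H)\,\pi_B(A^+)=(H/HB^+)\pi_B(A)^+$, where the last equality uses $\pi_B(A^+)=\pi_B(A)^+$. This identity is elementary: the inclusion $\pi_B(A^+)\subseteq\pi_B(A)^+$ holds because $\pi_B$ is counital, and conversely each $\bar a\in\pi_B(A)^+$ lifts to $A^+$ (replace a preimage $a_0$ by $a_0-\epsilon(a_0)1$, noting $\epsilon(a_0)=\epsilon(\bar a)=0$). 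Hence $\psi$ descends to the asserted isomorphism of (i); observe that no (co)flatness hypothesis is needed here.

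For (ii), I would apply Theorem~\ref{1st-it} to the restriction $\pi':=\pi_B|_A\colon A\to\pi_B(A)$, which is a surjective Hopf algebra map. Since $A$ is right faithfully coflat over $\pi_B(A)$ by hypothesis, Theorem~\ref{1st-it} yields an isomorphism $A/AK^+\cong\pi_B(A)$ with $K={}^{\co \pi'}A$, and it remains only to show $K=B$. This is where the left faithful flatness of $H$ over $B$ enters: by Theorem~\ref{ker-copi-1}(i) it gives $B={}^{\co \pi_B}H$. Comparing the defining coinvariance conditions, and using that $\Delta_A$ is the restriction of $\Delta_H$, I would identify ${}^{\co \pi'}A=A\cap{}^{\co \pi_B}H=A\cap B=B$, whence $K=B$ and $A/AB^+\cong\pi_B(A)$, as desired.

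The main obstacle is precisely the identification ${}^{\co \pi'}A=B$ in (ii). One must check carefully that the coinvariants of $\pi'$ computed inside $A$ coincide with $A\cap{}^{\co \pi_B}H$; this uses that the inclusion $\pi_B(A)\otimes A\hookrightarrow(H/HB^+)\otimes H$ is injective over the field $k$, so that the condition $(\pi'\otimes\id)\Delta(a)=1\otimes a$ is unambiguous in the two spaces. The genuinely nontrivial inclusion is then ${}^{\co \pi_B}H\subseteq B$, which is exactly the content of Takeuchi's theorem under the left faithful flatness assumption; the reverse inclusion $B\subseteq{}^{\co \pi_B}H$ is elementary, since $\pi_B(b)=\epsilon(b)1$ for every $b\in B$.
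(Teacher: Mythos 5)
Your proposal is correct and follows essentially the same route as the paper: part (i) via the factorization $\pi_A=\overline{\pi_A}\pi_B$ and the kernel computation $\pi_B(HA^+)=(H/HB^+)\pi_B(A)^+$, and part (ii) by applying Theorem \ref{1st-it} to $\pi_B|_A$ after identifying ${}^{\co \pi_B}A=A\cap{}^{\co \pi_B}H=B$ via Theorem \ref{ker-copi-1}(i). The extra details you supply (right normality of $\pi_B(A)$, the identity $\pi_B(A^+)=\pi_B(A)^+$, and the comparison of coinvariants in $A$ versus $H$) are correct elaborations of steps the paper leaves implicit.
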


\begin{proof} Since $\pi_B$ is a surjective Hopf algebra map, then $\pi_B(A)$ is
a normal Hopf subalgebra of $H/HB^+$.
Since $HB^+ \subseteq HA^+ = \ker \pi_A$, there exists a unique surjective
Hopf algebra map $\overline{\pi_A}:H/HB^+\to H/HA^+$ such that $\pi_A =
\overline{\pi_A}\pi_B$.
Hence $\ker \overline{\pi_A} = \pi_B(\ker \pi_A) = \pi_B(HA^+) =
(H/HB^+)\pi_B(A)^+$.
Therefore (i) holds. 

Theorem \ref{ker-copi-1} (i) implies that ${}^{\co
\pi_B}H = B$, since $H$ is left faithfully flat over $B$. Hence ${}^{\co
\pi_B}A =  A\cap {}^{\co \pi_B}H = B$.  Therefore, since $A$ is right faithfully
coflat over $\pi_B(A)$, Theorem
\ref{1st-it} implies that $\pi_B$ induces an isomorphism of Hopf algebras 
$A/AB^+ \cong \pi_B(A)$. This proves part (ii) and finishes the proof of the
theorem.
\end{proof}

\begin{corollary}\label{adm-max} Let $B$ be a normal Hopf subalgebra of $H$ such
that $H$ is left faithfully flat over $B$.
Assume that the quotient Hopf algebra $H/HB^+$ is simple.  Then for every normal
Hopf subalgebra $A$ such that $B \subseteq A$ and $H$ is left faithfully flat
over $A$, we have $A = B$ or $A = H$.
\end{corollary}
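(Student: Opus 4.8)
The plan is to apply the Third isomorphism theorem (Theorem \ref{3d-it}) to the chain $B \subseteq A \subseteq H$ and then exploit the simplicity of $H/HB^+$. First I observe that the hypotheses of Theorem \ref{3d-it} are satisfied: being normal Hopf subalgebras, $A$ and $B$ are in particular right normal, and $B$, as a Hopf subalgebra, is automatically a right coideal subalgebra, so $B$ is a right normal right coideal subalgebra of $H$ contained in the right normal Hopf subalgebra $A$. The theorem then provides that $\pi_B(A)$ is a normal Hopf subalgebra of the quotient $H/HB^+$; here I use that the image of a (two-sided) normal Hopf subalgebra under the surjective Hopf algebra map $\pi_B$ is again normal in the full sense, which is exactly what permits invoking simplicity. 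Since $H/HB^+$ is simple, $\pi_B(A)$ must be either the trivial Hopf subalgebra $k$ or all of $H/HB^+$, and I would treat these two possibilities separately.

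In the first case, $\pi_B(A) = k$, I claim $A \subseteq {}^{\co \pi_B}H$. Indeed, for $a \in A$ the coproduct $\Delta(a)$ lies in $A \otimes A$, and since $\pi_B$ restricts to the counit on $A$ we get $(\pi_B \otimes \id)\Delta(a) = \pi_B(a_{(1)}) \otimes a_{(2)} = 1 \otimes \epsilon(a_{(1)})a_{(2)} = 1 \otimes a$. Because $H$ is left faithfully flat over $B$, Theorem \ref{ker-copi-1} (i) identifies ${}^{\co \pi_B}H$ with $B$, so $A \subseteq B$; combined with the hypothesis $B \subseteq A$ this yields $A = B$. In the second case, $\pi_B(A) = H/HB^+$, the ideal $(H/HB^+)\pi_B(A)^+$ equals $(H/HB^+)(H/HB^+)^+$, which is the augmentation ideal of $H/HB^+$. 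Hence the quotient appearing in Theorem \ref{3d-it} (i) collapses, and that isomorphism gives $H/HA^+ \cong k$. This forces $\pi_A \colon H \to H/HA^+$ to coincide, up to the isomorphism, with the counit, so that ${}^{\co \pi_A}H = H$. Using that $H$ is left faithfully flat over $A$, Theorem \ref{ker-copi-1} (i) identifies $A = {}^{\co \pi_A}H = H$, completing this case.

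The argument is in essence a translation into the Hopf-algebraic dictionary of the elementary group-theoretic fact that an intermediate normal subgroup sitting over a simple quotient must be either the bottom or the top term. I do not expect a serious obstacle, but the two points that require care are: (a) verifying that $\pi_B(A)$ is normal in the two-sided sense demanded by the definition of simplicity, rather than merely right normal (this is where the full normality of $A$, not just of $B$, is used); and (b) checking that the two degenerate values of $\pi_B(A)$ genuinely pull back to $A = B$ and $A = H$, respectively. Both of these rest on the flatness/coflatness identifications of ${}^{\co \pi}H$ furnished by Takeuchi's Theorem \ref{ker-copi-1}, and I would make sure the faithful flatness hypotheses on $B$ and on $A$ are invoked precisely at those two identification steps.
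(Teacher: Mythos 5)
Your proposal is correct and follows essentially the same route as the paper's own proof: the dichotomy $\pi_B(A)=k$ or $\pi_B(A)=H/HB^+$ from simplicity, the identification $A\subseteq {}^{\co \pi_B}H=B$ via Theorem \ref{ker-copi-1} (i) in the first case, and the collapse $H/HA^+\cong k$ via Theorem \ref{3d-it} (i) followed by $A={}^{\co \pi_A}H=H$ in the second. Your explicit verifications (that $\pi_B\vert_A=\epsilon_A$ forces $A\subseteq {}^{\co\pi_B}H$, and that full two-sided normality of $\pi_B(A)$ is what licenses the appeal to simplicity) are exactly the points the paper's more compressed argument relies on.
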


\begin{proof}
Let $\pi_B: H\to H/HB^+$ denote the canonical map. By Theorem \ref{ker-copi-1}
(i), ${}^{\co \pi_B}H = B$, since $H$ is left faithfully flat over $B$. Since 
$\pi_B(A)$ is a normal Hopf subalgebra of $H/HB^+$ and  $H/HB^+$ is a simple
Hopf algebra  by assumption, then $\pi_B(A) = k$ or $\pi_B(A) = H/HB^+$.
If $\pi_B(A) = k$, then $\pi_B\vert_A = \epsilon_A$. Therefore $A
\subseteq {}^{\co \pi_B}H = B$ and $A = B$ in this case.
Suppose, on the other hand, that $\pi_B(A) = H/HB^+$. From Theorem \ref{3d-it},
we obtain that $\ker \overline{\pi_A} = (H/HB^+)\pi_B(A)^+ = (H/HB^+)^+$.
Hence
$H/HA^+
\cong (H/HB^+) / (H/HB^+) \cong k$. Since $H$ is left faithfully flat over $A$,
Theorem \ref{ker-copi-1} (i) implies that $A = H$. 
\end{proof}

\begin{example}\label{flker-max} Let $\mathfrak g$ be a finite dimensional
semisimple complex Lie algebra. Suppose $\ell$ is an odd
integer, relatively prime to 3 if $\mathfrak g$ has a component of type $G_2$,
and let $\varepsilon$ be a
primitive $\ell$-th root of unity in $\mathbb C$.
Let also $U(\mathfrak{g})$ be the universal enveloping algebra of $\mathfrak{g}$
over the cyclotomic field $k = \mathbb Q(\varepsilon)$ and let
$U_\varepsilon(\mathfrak g)$ be the quantum enveloping algebra
introduced by Lusztig \cite{luzstig}. Then $U_\varepsilon(\mathfrak g)$ is a
pointed Hopf algebra over $k$ with bijective antipode.

Consider the Frobenius homomorphism $\Fr: U_\varepsilon(\mathfrak g) \to
U(\mathfrak{g})$ \cite[8.10, 8.16]{luzstig}.  The Frobenius-Lusztig kernel of
$U_\varepsilon(\mathfrak g)$ is the finite dimensional Hopf subalgebra
$\mathfrak{u} = U_\varepsilon(\mathfrak g)^{\co \Fr}$ of
$U_\varepsilon(\mathfrak g)$.  $\Fr$
induces a cleft exact sequence of Hopf algebras
$k \longrightarrow \mathfrak u \longrightarrow U_\varepsilon(\mathfrak g)
\overset{\Fr}\longrightarrow U(\mathfrak{g}) \longrightarrow k$;
see \cite[Lemma 3.4.1 and Proposition 3.4.4]{notes-ext}. In particular, there
is  an isomorphism of Hopf algebras $U_\varepsilon(\mathfrak
g)/U_\varepsilon(\mathfrak g)\mathfrak{u}^+ \cong U(\mathfrak{g})$.

Suppose $\mathfrak{g}$ is a simple Lie algebra. Then
$U(\mathfrak{g})$ is a simple Hopf algebra (this can be seen as a consequence of
the Cartier-Kostant-Milnor-Moore theorem). 
Corollary  \ref{adm-max} implies that $\mathfrak{u} \subseteq U(\mathfrak{g})$
is a maximal faithfully flat normal inclusion of Hopf algebras.
\end{example}

\subsection{Summary} The following theorem summarizes the contents of
the previous subsections in the finite dimensional case.

\begin{theorem} Let $H$ be a finite dimensional Hopf algebra.
The following hold:

(i) Let $A, B$ be Hopf subalgebras of $H$ such that $A$
normalizes $B$. Then $B$ is a normal Hopf subalgebra of $AB$, $A\cap B$ is a
normal Hopf subalgebra of $A$ and there is an exact sequence of Hopf algebras
$k \longrightarrow A\cap B \longrightarrow A \longrightarrow
AB/AB^+\longrightarrow k$.

(ii) Let $A$ be a normal Hopf subalgebra and $B$ a right normal right coideal
subalgebra of $H$ such that $B \subseteq A$. 
Then there exists a unique Hopf algebra map $\pi:H/HB^+ \to H/HA^+$ such
that $\pi \pi_B = \pi_A$ and the map $\pi$ fits into an exact sequence of Hopf
algebras
$k \longrightarrow A/AB^+ \longrightarrow H/HB^+ \overset{\pi}\longrightarrow
H/HA^+\longrightarrow k$.
\end{theorem}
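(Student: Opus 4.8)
The plan is to read both statements as the finite-dimensional specializations of the Second and Third isomorphism theorems (Theorems \ref{2nd-hopf} and \ref{3d-it}), for which the role of finite-dimensionality is exactly twofold. First, since $H$ is finite dimensional its antipode is bijective, and so are the antipodes of all the Hopf subalgebras and quotient Hopf algebras occurring below; by Remark \ref{normal-ba} (ii) this means that the notions of right normal, left normal and normal coincide throughout, so that every right-normal conclusion of Theorems \ref{2nd-hopf} and \ref{3d-it} upgrades to genuine normality. Second, all the faithful (co)flatness hypotheses of those theorems hold automatically: by the Nichols--Zoeller theorem (Remark \ref{suff-iii}) a finite-dimensional Hopf algebra is free, hence faithfully flat, over any Hopf subalgebra and, dually, faithfully coflat over any quotient Hopf algebra, while faithful flatness over the right coideal subalgebra $B$ in part (ii) is supplied by the corresponding freeness of finite-dimensional Hopf algebras over their coideal subalgebras.

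For part (i), I would apply Theorem \ref{2nd-hopf} to the pair $A, B$. Part (ii) of that theorem, read together with the bijectivity of antipodes and Remark \ref{normal-ba} (ii), gives that $B$ is a normal Hopf subalgebra of $AB$ and that $A\cap B$ is a normal Hopf subalgebra of $A$, which are the first two assertions. For the exact sequence I would use that $A\cap B$ is a normal Hopf subalgebra of the finite-dimensional Hopf algebra $A$: faithful flatness of $A$ over $A\cap B$ then produces, as recalled in Section \ref{dos}, the exact sequence $k \to A\cap B \to A \to A/A(A\cap B)^+ \to k$. Finally I would splice in the isomorphism $A/A(A\cap B)^+ \cong AB/AB^+$ furnished by Theorem \ref{2nd-hopf} (iii) --- whose flatness and coflatness hypotheses are again automatic --- to rename the right-hand term as $AB/AB^+$, obtaining the stated sequence.

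For part (ii), the map $\pi$ is exactly the map $\overline{\pi_A}$ constructed in the proof of Theorem \ref{3d-it}: since $B \subseteq A$ we have $HB^+ \subseteq HA^+ = \ker \pi_A$, so there is a unique Hopf algebra map $\pi = \overline{\pi_A} : H/HB^+ \to H/HA^+$ with $\pi\pi_B = \pi_A$, and by Theorem \ref{3d-it} (i) its kernel is $(H/HB^+)\pi_B(A)^+$. I would then observe that $\pi_B(A)$ is a normal Hopf subalgebra of the finite-dimensional Hopf algebra $H/HB^+$, so that faithful flatness over it yields the exact sequence $k \to \pi_B(A) \to H/HB^+ \overset{\pi}\longrightarrow H/HA^+ \to k$. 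The last step is to identify the kernel term: Theorem \ref{3d-it} (ii), whose hypotheses hold automatically here, gives the isomorphism $A/AB^+ \cong \pi_B(A)$, and substituting it produces the displayed sequence.

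Since the substantive content was already carried out in proving the isomorphism theorems, essentially no new difficulty remains; the proof is a matter of bookkeeping, checking that each faithful (co)flatness hypothesis invoked is covered in the finite-dimensional setting and that the two displayed sequences satisfy the defining conditions of an exact sequence (as recalled in Section \ref{dos}, for a normal Hopf subalgebra over which $H$ is faithfully flat, conditions (a) and (b) imply (c)). The one point deserving genuine attention is the faithful flatness of $H$ over the right coideal subalgebra $B$ in part (ii), which is the only place where an input beyond Nichols--Zoeller freeness over Hopf subalgebras is required; this I would isolate and settle first before assembling the two sequences.
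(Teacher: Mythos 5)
Your proposal is correct and follows essentially the same route as the paper, whose entire proof is the observation that a finite dimensional Hopf algebra is free over any Hopf subalgebra (Nichols--Zoeller) and over any right coideal subalgebra (Skryabin), so that all the (co)flatness hypotheses of Theorems \ref{2nd-hopf} and \ref{3d-it} are automatic. You correctly isolate the one genuinely non-obvious input, namely Skryabin's freeness over the coideal subalgebra $B$ in part (ii), which is exactly the second citation the paper supplies.
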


\begin{proof}  A finite
dimensional Hopf algebra is free over any Hopf
subalgebra and over any right coideal subalgebra \cite{NZ}, \cite{skryabin}. The
statement follows from Theorems \ref{1st-iso},
\ref{2nd-hopf} and \ref{3d-it}.
\end{proof}

\subsection{Zassenhaus' butterfly lemma} We assume in this subsection that $H$
is finite dimensional. 
As an application of Theorem \ref{3d-it} we obtain the following 
analogue of Zassenhaus' butterfly lemma of group theory:

\begin{theorem}\label{butterfly} Let $A,
B$ be Hopf subalgebras of $H$ and let $A', B'$ be normal Hopf subalgebras of $A$
and
$B$, respectively. Then the following hold:

(i) $A'(A\cap B')$ is a normal Hopf subalgebra of $A'(A\cap B)$.

(ii) $B'(A'\cap B)$ is a normal Hopf subalgebra of $B'(A\cap B)$.

(iii) $A'(A \cap B') \cap A \cap B = (A' \cap B) (A \cap B') = B'(A' \cap
B) \cap A \cap B$.

(iv) There is an isomorphism of Hopf algebras $$\dfrac{A'(A\cap
B)}{A'(A\cap B)(A'(A\cap B'))^+} \cong \dfrac{B'(A\cap B)}{B'(A\cap B)(B'(A'\cap
B))^+}.$$ 
\end{theorem}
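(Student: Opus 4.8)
The plan is to follow the classical group-theoretic proof of Zassenhaus' lemma, transporting each step through the third isomorphism theorem (Theorem \ref{3d-it}) and the second isomorphism theorem (Theorem \ref{2nd-hopf}). First I would establish the normality claims (i) and (ii). Since $A'$ is normal in $A$ and $A \cap B'$ is a Hopf subalgebra of $A$ (being the intersection of $A$ with the normal-in-$B$ subalgebra $B'$), I would check that $A'$ normalizes $A \cap B'$ and that $A \cap B'$ normalizes $A'$, so that by Theorem \ref{2nd-hopf}(i) the product $A'(A\cap B')$ is a Hopf subalgebra; the same gives $A'(A\cap B)$ as a Hopf subalgebra. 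The normality of $A'(A\cap B')$ inside $A'(A\cap B)$ should follow from verifying stability under the adjoint action of the larger product, which reduces to the normality of $A'$ in $A$ together with the normality of $A\cap B'$ inside $A\cap B$ (the latter coming from $B'$ being normal in $B$, intersected with $A$). Part (ii) is entirely symmetric in the roles of $A$ and $B$.

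Next I would prove the symmetric intersection identity (iii), which is the technical heart and identifies the common ``denominator''. The key is to show
$$A'(A\cap B') \cap (A\cap B) = (A'\cap B)(A\cap B'),$$
for which the containment $\supseteq$ is immediate, while $\supseteq$ the reverse $\subseteq$ is the delicate part: one argues that an element of $A'(A\cap B')$ lying in $A\cap B$ can be rewritten, using that $A' \cap (A\cap B) = A'\cap B$ and a modular-law type manipulation, as a product in $(A'\cap B)(A\cap B')$. In the Hopf setting the group-theoretic modular law is not available verbatim, so I would instead use dimension counting via Corollary \ref{dim-ab} to pin down $\dim\bigl(A'(A\cap B')\cap (A\cap B)\bigr)$ and match it against $\dim (A'\cap B)(A\cap B')$, after checking the relevant products and intersections are Hopf subalgebras of the expected sizes. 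By symmetry the same expression equals $B'(A'\cap B)\cap (A\cap B)$.

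Finally, for the isomorphism (iv) I would apply the third isomorphism theorem twice. Writing $N = A'(A\cap B')$ and $M = A'(A\cap B)$, Theorem \ref{3d-it}(i) (together with part (i) above and the additivity through $\pi_{A\cap B'}$) identifies the left-hand quotient $M/MN^+$ with a quotient of the form $(A\cap B)\big/(A\cap B)\bigl((A'\cap B)(A\cap B')\bigr)^+$, via the isomorphism theorem applied to the chain $N \subseteq M$ restricted along the inclusion $A\cap B \hookrightarrow M$; here the intersection identity (iii) is exactly what lets me replace $N \cap (A\cap B)$ by the symmetric term $(A'\cap B)(A\cap B')$. The same computation on the $B$-side produces the identical quotient $(A\cap B)\big/(A\cap B)\bigl((A'\cap B)(A\cap B')\bigr)^+$, and composing the two isomorphisms yields (iv).

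The main obstacle I anticipate is the reverse inclusion in (iii): the absence of a literal modular law for Hopf subalgebras forces a substitute argument, and I expect to lean on the dimension formula of Corollary \ref{dim-ab} and the finite-dimensionality hypothesis to force equality of the two sides once both are shown to be Hopf subalgebras. A secondary subtlety is the bookkeeping in (iv), ensuring that the two applications of Theorem \ref{3d-it} really land on the \emph{same} quotient Hopf algebra and not merely isomorphic-looking ones; this is where keeping the symmetric description from (iii) in hand is essential.
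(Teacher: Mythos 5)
Your proposal matches the paper's proof in all essentials: parts (i)--(ii) by checking stability under the adjoint actions of $A'$ and of $A\cap B$, part (iii) by the dimension count via Corollary \ref{dim-ab} in place of a modular law, and part (iv) by reducing both quotients to the common Hopf algebra $(A\cap B)\big/(A\cap B)\bigl((A'\cap B)(A\cap B')\bigr)^+$ and invoking (iii). The only slip is a label: the reduction you describe in (iv), identifying $M/MN^+$ with $(A\cap B)/(A\cap B)(N\cap(A\cap B))^+$ for $M=N(A\cap B)$, is the second isomorphism theorem (Theorem \ref{2nd-hopf}(iii)), which is what the paper uses, not Theorem \ref{3d-it}.
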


\begin{proof} Note that, since $A'$ is normal in $A$, then $A'(A\cap B')$ and
$A'(A\cap B)$ are indeed Hopf subalgebras of $H$. Similarly, $B'(A'\cap B)$ and
$B'(A\cap B)$ are Hopf subalgebras of $H$, because $B'$ is normal in $B$. 
 Part (i) follows from the fact that $A'(A\cap B')$ is stable under
both  adjoint actions of $A'$ (since $A' \subseteq A'(A\cap B')$) and also under
both
adjoint
actions of $A\cap B$, because $A'$ is normal in $A$ and $B'$ is normal in $B$.
Part (ii) reduces to  (i),
interchanging the r\^ oles of $A$ and $B$.

We next prove (iii). It is clear that  
$(A' \cap B) (A \cap B') \subseteq A'(A \cap B') \cap A \cap B$.
By Corollary \ref{dim-ab}, we have 
\begin{equation}\label{dd}\dim (A' \cap B) (A \cap B') = \dfrac{\dim A' \cap B
\dim A \cap B'}{\dim A' \cap B'}.
\end{equation}
On the other side, using again Corollary \ref{dim-ab}, we compute 
\begin{align*}\dim A'(A \cap B') \cap A \cap B & = \dfrac{\dim A'(A \cap B')
\dim A \cap B}{\dim A'(A \cap B') (A \cap B)}  =  \dfrac{\dim A'(A \cap B')
\dim
A \cap B}{\dim A'(A \cap B)}\\ 
& = \dfrac{\dim A' \dim A\cap B' \dim A \cap B}{\dim A'\cap B' \dim A'(A\cap B)}
= \dfrac{\dim A\cap B' \dim A' \cap B}{\dim A'\cap B'}.
\end{align*}
Comparing this formula with \eqref{dd}, we find that $(A' \cap B) (A \cap B')$
and $A'(A \cap
B') \cap A \cap B$ have the same    (finite) dimension, and therefore they are
equal. By
symmetry, this also shows that $(A' \cap B) (A \cap B') = B'(A' \cap B) \cap A
\cap B$. Hence we get (iii). 

 To prove part (iv) we argue as follows. From Theorem \ref{2nd-hopf},
we obtain
\begin{align*}
\dfrac{A'(A\cap B)}{A'(A\cap B)(A'(A\cap B'))^+}  & = \dfrac{A' (A\cap B')
(A\cap
B)}{A'(A\cap B)(A'(A\cap B'))^+} \\ & \cong \dfrac{A\cap B}{(A\cap B) (A'(A\cap
B') \cap A \cap B)^+},\\
\textrm{and \quad } \dfrac{B'(A\cap B)}{B'(A\cap B)(B'(A'\cap B))^+} & =
\dfrac{B'
(A'\cap B) (A\cap B)}{B'(A\cap B)(B'(A'\cap B))^+} \\
& \cong \dfrac{A\cap B}{(A\cap B) (B'(A'\cap B) \cap A \cap B)^+}.
\end{align*}
Then part (iv) follows from (iii). This finishes the proof of the theorem.
\end{proof}

\section{Composition series of finite dimensional Hopf
algebras}\label{invariant}

Let $H$ be a finite dimensional Hopf algebra over $k$. Recall
 that  a composition series of $H$ is a
sequence of finite dimensional simple Hopf algebras $\Hh_1, \dots, \Hh_n$
defined as $\Hh_1 = H$ and $n = 1$, if $H$ is simple, and 
$n = m+l$, $\Hh_i = \Aa_i$, if $1\leq i \leq m$,
$\Hh_{i} = \Bb_{i-m}$, if $m < i \leq m+l$, whenever  $k \subsetneq A \subsetneq
H$ is a normal Hopf subalgebra, and $\Aa_1,
\dots, \Aa_m$, $\Bb_1, \dots, \Bb_l$, are composition series of $A$ and $B =
H/HA^+$,
respectively. See Definition \ref{cs-invariant}.

\begin{lemma} Every finite dimensional Hopf algebra admits a composition series.
\end{lemma}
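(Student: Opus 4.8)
The plan is to argue by induction on $\dim H$. For the base case, if $\dim H = 1$ then $H = k$, which has no proper nontrivial normal Hopf subalgebra and is therefore simple; the one-term sequence $\Hh_1 = H$ is then a composition series by the first clause of Definition \ref{cs-invariant}. For the inductive step I would assume that every finite dimensional Hopf algebra of dimension strictly smaller than $\dim H$ admits a composition series.

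If $H$ is simple, there is nothing to do: the sequence $\Hh_1 = H$, $n = 1$, works. Otherwise, by the definition of simplicity, $H$ contains a normal Hopf subalgebra $A$ with $k \subsetneq A \subsetneq H$. Since $A$ is normal, $HA^+ = A^+H$ is a Hopf ideal of $H$, so $B = H/HA^+$ is a quotient Hopf algebra and we obtain the associated exact sequence $k \to A \to H \to B \to k$ recalled in Section \ref{dos}.

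The essential quantitative input is that both factors have strictly smaller dimension. Because $H$ is finite dimensional, the Nichols--Zoeller theorem guarantees that $H$ is free over $A$, the above exact sequence is cleft, and hence $\dim H = \dim A \, \dim B$. Since $k \subsetneq A \subsetneq H$ we have $2 \le \dim A < \dim H$, whence also $\dim B = \dim H / \dim A \le \dim H / 2 < \dim H$. Thus $A$ and $B$ are finite dimensional Hopf algebras of dimension strictly less than $\dim H$, and the inductive hypothesis provides composition series $\Aa_1, \dots, \Aa_m$ of $A$ and $\Bb_1, \dots, \Bb_l$ of $B$. Concatenating them exactly as prescribed in the second clause of Definition \ref{cs-invariant} yields a composition series of $H$, completing the induction.

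I do not expect a genuine obstacle here: the argument is a routine well-founded induction, and its only nontrivial ingredient, the multiplicativity $\dim H = \dim A \, \dim B$, is already furnished by the cleftness of exact sequences of finite dimensional Hopf algebras recorded in Section \ref{dos}. The single point requiring care is to confirm that the chosen $A$ makes \emph{both} factors strictly smaller, which is precisely where the hypotheses $A \neq k$ and $A \neq H$ enter.
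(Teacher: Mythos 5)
Your argument is correct and is essentially identical to the paper's own proof: both proceed by induction on $\dim H$, selecting a proper nontrivial normal Hopf subalgebra $A$ when $H$ is not simple and using the multiplicativity $\dim H = \dim A \,\dim H/HA^+$ (from Nichols--Zoeller/cleftness) to ensure both factors have strictly smaller dimension. The paper states this more tersely, but the content is the same.
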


\begin{proof} If $H$ is simple, then
$H_1 = H$ is a composition series of $H$. Otherwise, $H$
contains a normal Hopf subalgebra $k \subsetneq A \subsetneq H$. We have $\dim
A, \dim H/HA^+ < \dim H$, because $\dim H =\dim A \dim H/HA^+$. The lemma
follows by induction.
\end{proof}

As an application of the results in Section \ref{tres} we are now able to prove
the
Jordan-H\" older Theorem \ref{jh-invariant} for finite dimensional Hopf
algebras:

\begin{proof}[Proof of Theorem \ref{jh-invariant}] The proof is by induction on
the dimension of $H$. If $H$ is
simple, there is nothing to prove.
Assume that $H$ is not simple. Let $k \subsetneq A \subsetneq H$ be a normal
Hopf subalgebra, and let $\Aa_1,
\dots, \Aa_m$, $\Bb_1, \dots, \Bb_l$, be composition series of $A$ and $B =
H/HA^+$,
respectively. 

Suppose that there exists a normal Hopf subalgebra $K$ such that $A \subsetneq K
\subsetneq H$.
Let $\Kc_1, \dots, \Kc_r$, $\Ll_1, \dots, \Ll_s$, be composition series of $K$
and
$L = H/HK^+$,
respectively. Then $\Kc_1, \dots, \Kc_r, \Ll_1, \dots, \Ll_s$ is a composition
series of
$H$. Let $\Cc_1, \dots, \Cc_p$ be a composition series of $K/KA^+$.
Since $\dim K < \dim H$, it follows by induction that $r = m+p$ and the sequence
$\Kc_1, \dots,
\Kc_r$ is a permutation of the sequence $\Aa_1, \dots, \Aa_m, \Cc_1, \dots,
\Cc_p$.

By Theorem \ref{3d-it} (iii), there is an exact sequence of Hopf algebras $$k
\longrightarrow K/KA^+ \longrightarrow H/HA^+ \longrightarrow
H/HK^+\longrightarrow k.$$
We  also have $\dim H/HA^+ < \dim H$. Hence, by induction, $l = p+s$ and the
sequence $\Bb_1,
\dots, \Bb_l$ is a permutation of the
sequence $\Cc_1, \dots, \Cc_p, \Ll_1, \dots, \Ll_s$. In conclusion, $r + s = m +
l$ and
the sequence $\Kc_1,
\dots, \Kc_r, \Ll_1, \dots, \Ll_s$ is a permutation of the sequence $\Aa_1,
\dots, \Aa_m,
\Bb_1, \dots, \Bb_l$.

 Let now $k \subsetneq A' \subsetneq H$ be another normal
Hopf subalgebra, and let $\Aa'_1,
\dots, \Aa'_e$, $\Bb'_1, \dots, \Bb'_d$, be composition series of $A'$ and
$B' = H/H{A'}^+$,
respectively.  We want to show that $m+l = e+d$ and $\Aa_1,
\dots, \Aa_m$, $\Bb_1, \dots, \Bb_l$ is a permutation of $\Aa'_1,
\dots, \Aa'_e$, $\Bb'_1, \dots, \Bb'_d$. We may 
assume that $A \neq A'$. Furthermore, the first part of the proof implies that
we may also
assume that $A$ and $A'$ are maximal proper normal Hopf subalgebras. 
Since $AA'$ is a normal Hopf subalgebra of $H$ containing strictly $A$ and $A'$,
then $AA' = H$. In view of Theorem \ref{2nd-hopf}, $A\cap A'$ is a normal Hopf
subalgebra of $A$ and of $A'$ and there are 
isomorphisms $H/HA^+ \cong A'/A'(A'\cap A)^+$, $H/H{A'}^+
\cong A/A(A'\cap A)^+$. 

Let $\Uu_1, \dots, \Uu_f$ be a composition series of 
$A\cap A'$.
Since $\dim A, \dim A' < \dim H$, the inductive
assumption implies that $e = f+l$, $m = f+d$, $\Aa'_1,
\dots, \Aa'_e$ is a permutation of  $\Uu_1, \dots, \Uu_f, \Bb_1,\dots, \Bb_l$,
and  $\Aa_1,
\dots, \Aa_m$ is a permutation of $\Uu_1, \dots, \Uu_f$, $\Bb'_1,\dots,
\Bb'_d$. 
Hence $e+d = m+l$ and the sequence $\Aa'_1, \dots, \Aa'_e$, $\Bb'_1, \dots,
\Bb'_d$ is a
permutation of 
$\Aa_1, \dots, \Aa_m$, $\Bb_1, \dots, \Bb_l$, as claimed. This finishes the
proof of the
theorem.
\end{proof}

Theorem \ref{jh-invariant} permits to introduce the following invariants of a
finite
dimensional Hopf algebra:

\begin{definition} Let $H$ be a finite dimensional Hopf algebra and let 
$\Hh_1, \dots,$ $\Hh_n$
be a composition series of $H$. The simple Hopf algebras $\Hh_i$, $1\leq i\leq
n$, will be called the \emph{composition factors of $H$}. The number $n$ will be
called the \emph{length of $H$}.
\end{definition}

\begin{corollary}\label{additive} Suppose $k \longrightarrow A \longrightarrow H
\longrightarrow B
\longrightarrow k$ is any exact sequence of Hopf algebras.
Then the length of $H$ equals the sum of the lengths of $A$ and $B$.
\qed \end{corollary}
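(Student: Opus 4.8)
The plan is to reduce the statement to the recursive Definition \ref{cs-invariant} of composition series together with the Jordan-H\"older Theorem \ref{jh-invariant}. Since length is only defined for finite dimensional Hopf algebras, all the Hopf algebras in the sequence are finite dimensional: $A$ is finite dimensional because it embeds in $H$, and $B$ is finite dimensional as a quotient of $H$. The first step is therefore to recognize the Hopf subalgebra $A$ appearing in the exact sequence as a \emph{normal} Hopf subalgebra of $H$ with $B \cong H/HA^+$. The injective map $i$ identifies $A$ with the Hopf subalgebra $i(A) = {}^{\co \pi}H$ of $H$, where $\pi\colon H \to B$ is the given surjection (condition (c)). As recalled in Section \ref{dos}, ${}^{\co \pi}H$ is always a right normal right coideal subalgebra of $H$; being moreover a Hopf subalgebra with bijective antipode (as $A$ is finite dimensional), Remark \ref{normal-ba} (ii) shows that it is in fact normal in $H$. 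Condition (b) gives $\ker \pi = HA^+$, whence $B = \pi(H) \cong H/HA^+$.

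Once $A$ is seen to be normal with $B \cong H/HA^+$, the second step is to apply the definition of composition series directly. Assume first that $k \subsetneq A \subsetneq H$. Choosing composition series $\Aa_1, \dots, \Aa_m$ of $A$ and $\Bb_1, \dots, \Bb_l$ of $B \cong H/HA^+$, the concatenation $\Aa_1, \dots, \Aa_m, \Bb_1, \dots, \Bb_l$ is, by Definition \ref{cs-invariant}, a composition series of $H$ of length $m+l$. By Theorem \ref{jh-invariant} the length of $H$ does not depend on the chosen series, so it equals $m+l$, which is the sum of the lengths of $A$ and $B$.

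Finally I would dispose of the degenerate cases. If $A = k$ then $HA^+ = 0$ and $B \cong H$, while if $A = H$ then $B \cong H/HH^+ \cong k$; in either situation the asserted identity holds once one records the convention that the trivial Hopf algebra $k$ has length $0$. I do not anticipate a serious obstacle in this argument: the only point requiring genuine care is the verification that the coideal subalgebra ${}^{\co \pi}H$ is actually normal, and this is precisely where finite-dimensionality enters, through the bijectivity of the antipode and Remark \ref{normal-ba} (ii). Everything else is a formal consequence of the recursive definition and of the independence of the length established in Theorem \ref{jh-invariant}.
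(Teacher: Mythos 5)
Your proof is correct and takes essentially the same route as the paper, which states the corollary with no proof precisely because, once $i(A)$ is identified as a normal Hopf subalgebra of $H$ with $B \cong H/HA^+$, concatenating composition series of $A$ and $B$ as in Definition \ref{cs-invariant} and invoking the well-definedness of length from Theorem \ref{jh-invariant} gives the result immediately. Your explicit verification of normality via Remark \ref{normal-ba} (ii) and your treatment of the degenerate cases $A=k$ and $A=H$ are details the paper leaves tacit, but they are consistent with its conventions.
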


\begin{corollary}\label{dual-inv} Let $H$ be a finite dimensional Hopf algebra
and let $\Hh_1,
\dots,$ $\Hh_n$, be the compostion factors of $H$. 
Then the composition factors of $H^*$ are $\Hh_1^*, \dots, \Hh_n^*$. In
particular
the length of $H^*$ coincides with the length of $H$. 
\end{corollary}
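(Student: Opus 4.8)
The plan is to induct on $\dim H$, relying on two facts recalled in Section \ref{dos}: first, that a sequence $k \to H' \to H \to H'' \to k$ is exact if and only if its dual $k \to (H'')^* \to H^* \to (H')^* \to k$ is exact; and second, that a finite dimensional Hopf algebra is simple if and only if its dual is simple. Since $H$ is finite dimensional, the Nichols-Zoeller theorem makes all the faithful (co)flatness hypotheses needed for the isomorphism theorems automatic, so I need not track them. The base case is immediate: if $H$ is simple then $n = 1$ and $\Hh_1 = H$, and since $H^*$ is then simple as well, its unique composition factor is $H^* = \Hh_1^*$.

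For the inductive step, suppose $H$ is not simple and fix a normal Hopf subalgebra $k \subsetneq A \subsetneq H$, producing the exact sequence $k \to A \to H \to B \to k$ with $B = H/HA^+$. By Definition \ref{cs-invariant}, writing $\Aa_1, \dots, \Aa_m$ and $\Bb_1, \dots, \Bb_l$ for composition series of $A$ and $B$, the composition factors of $H$ are $\Hh_i = \Aa_i$ for $1 \le i \le m$ and $\Hh_{m+j} = \Bb_j$ for $1 \le j \le l$. Dualizing gives an exact sequence $k \to B^* \to H^* \to A^* \to k$, in which $B^*$ is identified with a normal Hopf subalgebra of $H^*$ with quotient $A^* \cong H^*/H^*(B^*)^+$; the inclusions $k \subsetneq B^* \subsetneq H^*$ are proper because $1 < \dim B = \dim B^* < \dim H^*$. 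Since $\dim A^*, \dim B^* < \dim H$, the inductive hypothesis applies to each, giving that the composition factors of $A^*$ are $\Aa_1^*, \dots, \Aa_m^*$ and those of $B^*$ are $\Bb_1^*, \dots, \Bb_l^*$. Concatenating a composition series of the sub-object $B^*$ with one of the quotient $A^*$ then exhibits a composition series of $H^*$ whose factors are $\Bb_1^*, \dots, \Bb_l^*, \Aa_1^*, \dots, \Aa_m^*$, a permutation of $\Hh_1^*, \dots, \Hh_n^*$. By the Jordan-H\"older Theorem \ref{jh-invariant}, the composition factors of $H^*$ do not depend on the chosen series, so they are exactly $\Hh_1^*, \dots, \Hh_n^*$ up to order. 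The length statement is then immediate, since $\dim H^* = \dim H$ forces the number of factors to agree.

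The one subtlety I would flag is the reversal of roles under duality: the sub-object $A$ of $H$ becomes the quotient $A^*$ of $H^*$, while the quotient $B$ becomes the sub-object $B^*$, so the two blocks of factors come out in the opposite order. This is harmless precisely because Theorem \ref{jh-invariant} only determines the composition factors up to permutation. I do not expect any real obstacle beyond checking that $B^*$ is genuinely a proper normal Hopf subalgebra of $H^*$ --- which follows from the exactness of the dualized sequence (normality is encoded in the exact sequence, and bijectivity of the antipode in finite dimensions lets Remark \ref{normal-ba}(ii) upgrade right normality to normality) together with the dimension count above.
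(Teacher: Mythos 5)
Your proof is correct and follows exactly the route the paper takes: induction on $\dim H$ using the fact that a sequence is exact if and only if its dual is exact, combined with Theorem \ref{jh-invariant} to make the conclusion independent of the chosen series. The paper only sketches this in one line; your write-up fills in the same argument, including the harmless reversal of the sub-object and quotient blocks under dualization.
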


\begin{proof} The proof is by induction on the dimension of $H$, using the fact
that a sequence 
 $k \longrightarrow A \longrightarrow H \longrightarrow B
\longrightarrow k$ is exact if and only if the sequence $k \longrightarrow B^*
\longrightarrow H^* \longrightarrow A^*
\longrightarrow k$ is exact.
\end{proof}

\begin{proposition}\label{ss-invfactors} Let $H$ be a finite dimensional Hopf
algebra. Then $H$ is
semisimple (respectively, cosemisimple) if and only if all its composition
factors are semisimple (respectively, cosemisimple).  \end{proposition}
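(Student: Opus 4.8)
The plan is to argue by induction on $\dim H$, letting all the analytic content be carried by Remark \ref{ss-coss} while the combinatorics is handled by the recursive structure of a composition series. If $H$ is simple, then $H$ itself is its unique composition factor, and the statement is a tautology. This is the base case.

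For the inductive step, suppose $H$ is not simple and choose a normal Hopf subalgebra $k \subsetneq A \subsetneq H$ together with the associated exact sequence $k \longrightarrow A \longrightarrow H \longrightarrow B \longrightarrow k$, where $B = H/HA^+$, exactly as in Definition \ref{cs-invariant}. By that recursive definition, the factors of the composition series of $H$ built from $A$ are the factors of a composition series of $A$ followed by those of a composition series of $B$; by Theorem \ref{jh-invariant} these coincide, up to isomorphism and reordering, with the composition factors of $H$. Thus the collection of composition factors of $H$ is the disjoint union of those of $A$ and those of $B$. Since $\dim A, \dim B < \dim H$ (because $\dim H = \dim A\,\dim B$), the inductive hypothesis applies to both $A$ and $B$.

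Now I would apply Remark \ref{ss-coss} to the above exact sequence, which states that $H$ is semisimple if and only if $A$ and $B$ are both semisimple. Combining this with the inductive hypothesis, $H$ is semisimple if and only if every composition factor of $A$ and every composition factor of $B$ is semisimple, which by the previous paragraph is the same as saying that every composition factor of $H$ is semisimple. The cosemisimple statement follows by the identical argument, since Remark \ref{ss-coss} likewise gives that $H$ is cosemisimple if and only if $A$ and $B$ are; alternatively one can reduce it to the semisimple case by duality, using that $H$ is cosemisimple if and only if $H^*$ is semisimple, that $\Hh_i$ is cosemisimple if and only if $\Hh_i^*$ is semisimple, and that by Corollary \ref{dual-inv} the composition factors of $H^*$ are precisely the duals $\Hh_1^*, \dots, \Hh_n^*$.

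I do not expect a genuine obstacle here. The only point requiring care is that the composition factors form a well-defined invariant of $H$, so that we are entitled to compute them from the particular series arising from our chosen normal Hopf subalgebra $A$; this is exactly what Theorem \ref{jh-invariant} guarantees, and it is what lets the induction close cleanly.
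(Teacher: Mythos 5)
Your proof is correct and follows essentially the same route as the paper: induction on $\dim H$, with the base case of a simple $H$, the exact sequence $k \to A \to H \to B \to k$ handled by Remark \ref{ss-coss}, and the recursive description of the composition factors (justified as an invariant by Theorem \ref{jh-invariant}) closing the induction. The paper deduces the cosemisimple case purely by duality via Corollary \ref{dual-inv}, which is one of the two options you mention.
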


\begin{proof} We shall prove the statement concerning semisimplicity. This
implies the statement for cosemisimplicity, in view of Corollary \ref{dual-inv}.
We may assume that  $H$ is not simple. Then there is an exact sequence $k
\longrightarrow A \longrightarrow H \longrightarrow B \longrightarrow k$, where
$\dim A, \dim B < \dim H$. In particular, $H$ is semisimple if and only if $A$
and
$B$ are semisimple (see Remark \ref{ss-coss}). By definition, the composition
factors of $H$ are
$\Aa_1, \dots, \Aa_m$, $\Bb_1, \dots, \Bb_l$, where $\Aa_1, \dots, \Aa_m$ are
the commposition factors of $A$ and  $\Bb_1, \dots, \Bb_l$ are the composition
factors of $B$.  The proposition follows by an 
inductive argument.
\end{proof}

\begin{example} Let $G$ be a finite group and let $H =kG$ be the group Hopf
algebra of $G$. The normal Hopf subalgebras of $H$ are exactly those group
algebras $kN$, where $N$ is a normal
subgroup of $G$ and $kG/kG(kN)^+ \cong k(G/N)$. Thus the composition factors of
$H$ are exactly the group
algebras of the composition factors of $G$. In view  of Corollary \ref{dual-inv}
the composition factors of the dual group algebra $k^G$ are the dual group
algebras of the composition factors of $G$.
\end{example}

\begin{example}\label{ddoble} Let $F$, $\Gamma$ be finite groups and let $H$ be
an abelian extension of $k^\Gamma$ by $kG$, that is, $H$ is a Hopf algebra
fitting into an exact sequence $k \longrightarrow k^\Gamma \longrightarrow H
\longrightarrow kF \longrightarrow k$ (see Example \ref{ej-ab}). Then the
composition
factors of $H$ are the
group algebras of the composition factors of $F$ and the dual group algebras of
the composition factors of $\Gamma$. 

As an example, consider the Drinfeld double $D(G)$ of a finite group $G$; $D(G)$
fits into an exact sequence $k \longrightarrow k^G \longrightarrow D(G)
\longrightarrow kG \longrightarrow k$.
Therefore if $G_1, \dots, G_n$ are  the composition
factors of $G$, then the composition factors of $D(G)$ are the
Hopf algebras  $k^{G_1},
\dots, k^{G_n}, kG_1, \dots, kG_n$. In particular, the length of $D(G)$ is twice
the length of $G$.
\end{example}

\section{Upper and lower composition series}\label{cuatro} 

Along this section, $H$ will be a finite dimensional Hopf algebra over $k$. The
following definition extends the notion of subnormal series of a group. 

\begin{definition}\label{def-lss} A
\emph{lower subnormal
series} of $H$ is
a series of Hopf subalgebras
\begin{equation}\label{lowerseries}k = H_{n}  \subseteq H_{n-1}
\subseteq  \dots
\subseteq H_1 \subseteq H_0 = H, \end{equation} with $H_{i+1}$
normal in $H_i$, for all $i$. The \emph{factors} of the series
\eqref{lowerseries} are the quotient Hopf algebras $\overline H_i =
H_i/H_iH_{i+1}^+$, $i = 0, \dots, n-1$. 

An \emph{upper
subnormal series} of $H$ is a series of surjective Hopf
algebra maps
\begin{equation}\label{upperseries}H = H_{(0)} \to H_{(1)}
\to \dots \to H_{(n)} = k,
\end{equation} such that $H_{(i+1)}$ is a normal quotient Hopf algebra of
$H_{(i)}$, for all $i = 0, \dots, n-1$. The \emph{factors} of
\eqref{upperseries} are the Hopf algebras $\underline H_i = {}^{\co
H_{(i+1)}}H_{(i)} \subseteq H_{(i)}$, $i = 0, \dots, n-1$. 
\end{definition}

Lower and upper subnormal series were introduced in \cite[Section 3]{MW} under
the names \emph{normal} lower and upper series, respectively.  

\begin{definition} A \emph{refinement} of 
\eqref{lowerseries} is a lower subnormal series
\begin{equation}\label{refi}k = H'_{m} \subseteq H'_{m-1} \subseteq \dots
\subseteq H'_1 \subseteq H'_0 = H,\end{equation}
such that for all  $0 < i < n$, there exists $0 < N_i < m$, with $N_1 <
N_2 < \dots < N_m$, and $H_i = H'_{N_i}$.  If  \eqref{refi} is a refinement of
\eqref{lowerseries} and it does not coincide
with \eqref{lowerseries}, we shall say that it is a  \emph{proper refinement}.

Two lower subnormal series $k = H_{n} \subseteq H_{n-1} \subseteq
\dots
\subseteq H_1 \subseteq H_0 = H$ and $k = H'_{m} \subseteq H'_{m-1} \subseteq
\dots
\subseteq H'_1 \subseteq H'_0 = H$ will be called \emph{equivalent} if there
exists a bijection $f: \{0, \dots, n\} \to \{0, \dots, m\}$ such that the
corresponding factors are isomorphic as Hopf algebras, that is, such that 
$H_{i}/H_{i}H_{i+1}^+ \cong H'_{f(i)}/H'_{f(i)}{H'_{f(i)+1}}^+$.

 A \emph{lower composition series} of $H$ is a \emph{strictly
decreasing} lower subnormal series which does not admit a proper refinement. In
other words, a lower normal series \eqref{lowerseries} is a composition series
if and only if $H_i$ is a maximal normal Hopf subalgebra of
$H_{i-1}$, for all $i = 1, \dots,  n$.
\end{definition}

Upper composition series can be defined similarly. It is clear that every finite
dimensional Hopf algebra $H$ admits lower and upper composition series. 

\begin{remark}\label{duality} It follows from \cite[Theorem 3.2]{MW} that lower 
subnormal series
of $H$ with factors $\overline H_i$, $i = 0, \dots, n-1$,
correspond to upper subnormal series of $H^*$ with factors $\underline{H^*}_i
\cong (\overline H_i)^*$, $i = 0, \dots, n-1$. Similarly, upper  subnormal
series of $H$ with factors $\underline H_i$, $i = 0, \dots, n-1$, correspond
to lower subnormal series of $H^*$ with factors $\overline{H^*}_i
\cong \underline H_i^*$, $i = 0, \dots, n-1$.
In particular, upper composition series of $H$ with factors $\underline H_i$
correspond to
lower composition series of $H^*$ with factors $\underline H_i^*$. \end{remark}

\begin{proposition}\label{simple-factors} Let $H$ be a finite dimensional Hopf
algebra. Then a lower (respectively, upper) subnormal series of $H$ all of whose
factors are simple Hopf algebras is a lower (respectively, upper)
composition series. The factors of such series coincide, up to permutations,
with the composition
factors of $H$. 
\end{proposition}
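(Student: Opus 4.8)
The plan is to establish the statement first for lower subnormal series and then deduce the upper case by duality. For the reduction, given an upper subnormal series of $H$ all of whose factors $\underline H_i$ are simple, Remark \ref{duality} produces a corresponding lower subnormal series of $H^*$ with factors $\underline H_i^*$; since the dual of a simple Hopf algebra is again simple (recall $H$ is simple if and only if $H^*$ is simple), this lower series has simple factors. Applying the lower case and then dualizing back via Remark \ref{duality} and Corollary \ref{dual-inv} yields the upper statement, so it suffices to treat lower subnormal series.

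So let $k = H_n \subseteq \dots \subseteq H_1 \subseteq H_0 = H$ be a lower subnormal series with each factor $\overline H_i = H_i/H_iH_{i+1}^+$ simple. Since a simple Hopf algebra is in particular nontrivial, each $\overline H_i \neq k$, which forces $H_{i+1} \subsetneq H_i$; hence the series is strictly decreasing. To see that it admits no proper refinement I would show that $H_{i+1}$ is maximal among normal Hopf subalgebras of $H_i$. This is precisely Corollary \ref{adm-max}, applied inside the ambient Hopf algebra $H_i$ with normal Hopf subalgebra $H_{i+1}$ and simple quotient $\overline H_i$: the faithful flatness hypotheses hold automatically because $H_i$ is finite dimensional, hence free over every Hopf subalgebra by Nichols--Zoeller (Remark \ref{suff-iii}). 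By the characterization of lower composition series (a strictly decreasing lower subnormal series in which each $H_{i+1}$ is maximal normal in $H_i$), the series is a lower composition series.

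To identify the factors with the composition factors of $H$, I would argue by induction on $\dim H$. If $H$ is simple the only such series has the single factor $H$, which is the composition factor of $H$. Otherwise $n \geq 2$ and $k \subsetneq H_1 \subsetneq H$ is a normal Hopf subalgebra with $H/HH_1^+ = \overline H_0$. The truncated series $k = H_n \subseteq \dots \subseteq H_1$ is a lower subnormal series of $H_1$ with simple factors $\overline H_1, \dots, \overline H_{n-1}$; since $\dim H_1 < \dim H$, the inductive hypothesis identifies these, up to permutation, with the composition factors of $H_1$. On the other hand $\overline H_0$ is simple, so it is its own single composition factor. By Definition \ref{cs-invariant} applied to the normal Hopf subalgebra $A = H_1$, together with the well-definedness granted by Theorem \ref{jh-invariant}, the composition factors of $H$ are those of $H_1$ together with those of $\overline H_0$, that is $\overline H_0, \overline H_1, \dots, \overline H_{n-1}$ up to permutation. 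These are exactly the factors of the given series.

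The content is carried entirely by Corollary \ref{adm-max} (for the first assertion) and by Theorem \ref{jh-invariant} together with the recursive Definition \ref{cs-invariant} (for the second); the remaining points are bookkeeping rather than genuine obstacles. The main things to watch are that the faithful (co)flatness hypotheses of Corollary \ref{adm-max} and Definition \ref{cs-invariant} are satisfied, which is automatic in the finite dimensional setting by Nichols--Zoeller, and that the duality of Remark \ref{duality} correctly matches each factor with its dual, so that the upper case really does reduce to the lower one.
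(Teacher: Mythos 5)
Your proposal is correct and follows essentially the same route as the paper's own proof: Corollary \ref{adm-max} (with faithful flatness supplied by Nichols--Zoeller) gives maximality of each $H_{i+1}$ in $H_i$, and an induction on the truncated series $k = H_n \subseteq \dots \subseteq H_1$ combined with Definition \ref{cs-invariant} identifies the factors with the composition factors. Your added care about strict decrease of the series and the explicit dualization for the upper case are details the paper leaves implicit, but the argument is the same.
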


\begin{proof} Suppose \eqref{lowerseries} is a lower composition series with
simple factors. 
By Corollary \ref{adm-max}, \eqref{lowerseries} is a lower
composition series.  Let
$\overline H_i$, $i = 0, \dots, n-1$, be the lower composition factors of $H$.
Observe that $\overline H_i$, $i \geq 1$, are the lower composition
factors of $H_1$, and we may assume inductively that they coincide with its
composition factors. 
Since $H_1$ is normal in $H$ and $\overline H_0 = H/HH_1^+$ is simple, then the
composition factors of $H$ are, up to permutations, $\overline H_0$ and
$\overline H_i$, $i = 1, \dots, n-1$, as claimed.
\end{proof}

\begin{example}\label{cocommutative} Assume $H$ is a cocommutative Hopf
algebra. Then every quotient Hopf algebra of $H$ is normal. Suppose $B$ is a
normal Hopf subalgebra and $H/HB^+ \to Q$ is a
proper quotient Hopf algebra. Then ${}^{\co Q}H$ is a  normal Hopf
subalgebra of $H$ such that $B \subseteq {}^{\co Q}H$ and ${}^{\co Q}H \neq H$. 
This shows that the converse of Corollary \ref{adm-max} is true in this case. 
Hence a lower subnormal series of $H$ is a lower composition series
if and only if all its factors are simple Hopf algebras. By Proposition
\ref{simple-factors} the composition factors of $H$ coincide with  its lower
composition factors.
In particular, if $H$ is the group algebra of a finite group $G$, then
 the lower composition factors of $H$ are the group algebras of the composition
factors of $G$.
\end{example}

The next example shows that the converse of Proposition \ref{simple-factors} is
not true.

\begin{example}\label{dual-group} 
Let $G$ be a finite group and let $k^G$ be the
commutative Hopf algebra of $k$-valued functions on $G$. The Hopf subalgebras of
$k^G$ are of the form $k^{G/S}$, where $S$ is a normal subgroup of $G$, and
every Hopf subalgebra is normal. 

A lower composition series of $k^G$ corresponds
to a \emph{principal series} (or \emph{chief series}) of $G$, that is, a
strictly
increasing series of subgroups $S_0 = \{e\} \subseteq S_1 \subseteq \dots
\subseteq S_{n-1} \subseteq S_n = G$, where  $S_i$
is a normal subgroup of $G$, $0\leq i \leq n-1$, and there exists no normal
subgroup $S$ of $G$ such
that $S_i \subsetneq S \subsetneq S_{i+1}$ (see \cite[Section 1.3]{robinson}).

Recall that a \emph{characteristic subgroup} of a group $G$ is a subgroup stable
under the action of the automorphism group of $G$. The factors of a principal
series  (called the principal or chief factors of $G$)
are  \emph{characteristically simple} groups, that
is, they contain no proper characteristic subgroup.
It is a known fact that a finite group is characteristically simple if and only
if it is
isomorphic to a direct product of isomorphic simple groups (see for instance
\cite[3.3.15]{robinson}).
In particular, the lower composition factors of $k^G$ are not necessarily simple
Hopf algebras. 
\end{example}

The fact that a lower composition series can have factors which are not simple
motivates the following question:

\begin{question}\label{structure-factors} Can a lower (or upper) composition
series of a finite dimensional Hopf algebra be recognized by the structure
of its factors?
\end{question}

\subsection{Jordan-H\" older theorem}\label{jordan-holder}\label{cinco}

In this subsection we focus our discussion on lower composition series. We point
out that, by
duality, analogous results hold for upper composition series. See Remark
\ref{duality}.  

\begin{theorem}\label{schreier}\emph{(Schreier's refinement theorem.)} Let $H$
be a finite dimensional Hopf algebra. Then any two lower
subnormal series of $H$ admit equivalent refinements.
\end{theorem}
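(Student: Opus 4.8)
The plan is to follow the classical proof of Schreier's theorem exactly, replacing each group-theoretic isomorphism with its Hopf-algebraic counterpart, which is now available thanks to the butterfly lemma (Theorem \ref{butterfly}). So I start with two lower subnormal series of $H$, say
$$k = H_n \subseteq \dots \subseteq H_0 = H, \qquad k = H'_s \subseteq \dots \subseteq H'_0 = H,$$
and the goal is to interleave each series through the terms of the other so that the resulting two refinements have, term by term, isomorphic factors.

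First I would insert, between each pair of consecutive terms $H_{i+1} \subseteq H_i$, the intermediate Hopf subalgebras built from the second series. Concretely, for $0 \le i \le n-1$ and $0 \le j \le s$, set
$$H_{i,j} = H_{i+1}(H_i \cap H'_j),$$
so that $H_{i,0} = H_i$ and $H_{i,s} = H_{i+1}$, and these terms decrease as $j$ increases. The key normality claim — that $H_{i,j+1}$ is normal in $H_{i,j}$ — is precisely part (i) of Theorem \ref{butterfly} applied with $A = H_i$, $A' = H_{i+1}$, $B = H'_j$, $B' = H'_{j+1}$ (after checking that $H'_{j+1}$ is normal in the relevant intersection, which follows from $H'_{j+1} \trianglelefteq H'_j$ and the normalizing behaviour recorded in Remark \ref{normal-ba} and Theorem \ref{2nd-hopf}). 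This produces a refinement of the first series of length $ns$. Symmetrically, setting $H'_{j,i} = H'_{j+1}(H'_j \cap H_i)$ refines the second series, and part (ii) of the butterfly lemma gives the analogous normality.

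The heart of the argument is matching the factors. The factor of the first refined series at step $(i,j)$ is the quotient Hopf algebra
$$\frac{H_{i+1}(H_i \cap H'_j)}{H_{i+1}(H_i \cap H'_j)\,\bigl(H_{i+1}(H_i \cap H'_{j+1})\bigr)^+},$$
and the factor of the second refined series at the corresponding step $(j,i)$ is the analogous expression with the roles of the two series interchanged. Part (iv) of Theorem \ref{butterfly} asserts exactly that these two quotient Hopf algebras are isomorphic. Thus the pairing $(i,j) \leftrightarrow (j,i)$ furnishes the required bijection between factors, up to the bookkeeping of which refined terms actually coincide (when $H_{i,j} = H_{i,j+1}$ the corresponding factor is the trivial Hopf algebra $k$, and these trivial factors occur equally on both sides and can be discarded).

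The main obstacle I anticipate is bookkeeping rather than conceptual: one must verify carefully that the chains $H_{i,j}$ genuinely form a lower subnormal series (each term normal in its predecessor, including across the boundaries $H_{i,s} = H_{i+1} = H_{i+1,0}$), and that the refinement relation in the sense of Definition \ref{def-lss} — i.e. the original terms reappear with strictly increasing indices — is respected. The normality across blocks is automatic since $H_{i,s} = H_{i+1}$ is already a term of the original series, and normality within a block is supplied by the butterfly lemma; the only genuine care needed is to confirm that these invocations of Theorem \ref{butterfly} have their hypotheses met, which here reduce to the finite-dimensionality of $H$ (giving faithful flatness via Nichols--Zoeller, Remark \ref{suff-iii}) together with the normality relations already present in the two given series.
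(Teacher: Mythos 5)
Your proposal is correct and follows essentially the same route as the paper's proof: the same intermediate terms $H_{i,j}=H_{i+1}(H_i\cap H'_j)$, normality of consecutive terms from part (i) of the butterfly lemma, and the matching of factors under $(i,j)\leftrightarrow(j,i)$ via part (iv). The only difference is cosmetic bookkeeping about trivial factors, which the paper handles implicitly through its definition of equivalence of series.
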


\begin{proof} Let $(\mathcal A)$, $(\mathcal B)$ be two lower subnormal series
of $H$, where $$(\mathcal A): \; k = A_{n} \subseteq A_{n-1} \subseteq \dots
\subseteq A_0 = H,\quad (\mathcal B): \; k = B_{m} \subseteq B_{m-1} \subseteq
\dots
\subseteq B_0 = H,$$ $n, m \geq 1$.
For every $0 \leq i \leq n-1$, $0 \leq j \leq m$, let $A_{i, j} =
A_{i+1}(B_j\cap A_i)$.
By Theorem \ref{butterfly} (i), $A_{i, j}$ is a normal Hopf subalgebra of $A_{i,
j-1}$, for all $j = 1, \dots, m$.
In addition, $A_{i, 0} = A_i$, for all $i = 0, \dots, n-1$,  and $A_{i+1}$ is a
normal Hopf subalgebra of $A_{i, j}$, for all $i = 0, \dots, n-1$, $j = 0,
\dots, m$.
Hence we obtain a refinement of $(\mathcal A)$:
\begin{align*}k & \subseteq A_{n-1, m-1} \subseteq A_{n-1,m-2} \subseteq \dots
\subseteq A_{n-1,0} = A_{n-1} \subseteq A_{n-2, m-1}  \subseteq \dots \subseteq
A_{n-2,0} \\
& = A_{n-2} \subseteq \dots \subseteq A_{1, m-1}  \subseteq \dots \subseteq
A_{1,0} =
A_1 \subseteq A_{0, m-1} \subseteq \dots \subseteq A_{0,0} =
H. \end{align*}
Similarly, let $B_{j, i} = B_{j+1}(A_i \cap B_j)$, for all $0 \leq j \leq m-1$,
$0 \leq i \leq n$. Then $B_{j, i+1}$ is a normal Hopf subalgebra of $B_{j, i}$,
and we
obtain a refinement of $(\mathcal B)$:
\begin{align*}k & \subseteq B_{n-1, m-1} \subseteq B_{n-1,m-2} \subseteq \dots
\subseteq B_{n-1,0} = B_{n-1} \subseteq B_{n-2, m-1}  \subseteq \dots \subseteq
B_{n-2,0} \\
& = B_{n-2} \subseteq \dots \subseteq B_{1, m-1} \subseteq \dots \subseteq
B_{1,0} =
B_1 \subseteq B_{0, m-1}  \subseteq \dots \subseteq B_{0,0} =
H.
\end{align*}
There is a bijection between the set of indices in these new subnormal
series, induced by the map $(i, j) \to (j, i)$, $0 \leq i \leq n-1$, $0\leq j
\leq m-1$. Note that $A_{i, 0} = A_{i-1, m}$ and $B_{j, 0} = B_{j-1, m}$, $i,
j\geq 1$. 
The corresponding factors are, respectively
\begin{align*}\dfrac{A_{i, j}}{A_{i, j}A_{i, j+1}^+} & = \dfrac{A_{i+1}(B_j\cap
A_i)}{A_{i+1}(B_j\cap A_i) (A_{i+1}(B_{j+1}\cap A_i))^+}, \quad \textrm{ and 
}\\
\dfrac{B_{j, i}}{B_{j, i}B_{j, i+1}^+} & = \dfrac{B_{j+1}(A_i \cap
B_j)}{B_{j+1}(A_i \cap B_j) (B_{j+1}(A_{i+1} \cap B_j))^+}.\end{align*}
It follows from Theorem \ref{butterfly} (iv) that the corresponding factors are
isomorphic. Therefore the series $(\mathcal A)$ and $(\mathcal B)$ are
equivalent. 
\end{proof}

Since a lower composition series of $H$ admits no proper refinement,  we obtain:

\begin{theorem}\label{jh}\emph{(Jordan-H\"older theorem
for lower subnormal series.)} Any two lower composition series of a finite
dimensional Hopf algebra $H$ are equivalent. \qed \end{theorem}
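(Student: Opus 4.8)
The plan is to deduce Theorem \ref{jh} from Schreier's refinement theorem (Theorem \ref{schreier}), following the classical pattern of the proof in group theory. Let $(\mathcal A)$ and $(\mathcal B)$ be two lower composition series of $H$. Being lower subnormal series, by Theorem \ref{schreier} they admit equivalent refinements $(\mathcal A')$ and $(\mathcal B')$. The strategy is then to show that, once the trivial factors are discarded, each refinement recovers the factors of the composition series it refines; the equivalence of $(\mathcal A')$ and $(\mathcal B')$ will then descend to an equivalence of $(\mathcal A)$ and $(\mathcal B)$.

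First I would record the behaviour of factors under repetition. If $L \subseteq K$ are Hopf subalgebras of $H$ with $L$ normal in $K$, then the factor $K/KL^+$ satisfies $\dim K = \dim L \cdot \dim (K/KL^+)$ by cleftness of the associated exact sequence; hence $K/KL^+ \cong k$ if and only if $K = L$. In particular, in any strictly decreasing lower subnormal series all factors are nontrivial, while a repeated step contributes a trivial factor isomorphic to $k$.

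The key step is to describe the refinements of a lower composition series. Since each $H_i$ is a maximal normal Hopf subalgebra of $H_{i-1}$, any lower subnormal series inserted between the consecutive terms $H_{i-1} \supseteq H_i$ can only use normal Hopf subalgebras equal to $H_{i-1}$ or to $H_i$; thus every intermediate term is a repetition and produces a trivial factor, the single nontrivial factor between $H_{i-1}$ and $H_i$ being $\overline H_{i-1} = H_{i-1}/H_{i-1}H_i^+$ itself. Consequently the nontrivial factors of $(\mathcal A')$ are exactly the factors of $(\mathcal A)$, and likewise for $(\mathcal B')$ and $(\mathcal B)$.

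To conclude, I would use that the equivalence between $(\mathcal A')$ and $(\mathcal B')$ furnishes a bijection matching their factors up to Hopf algebra isomorphism. Since a Hopf algebra of dimension greater than $1$ cannot be isomorphic to $k$, this bijection sends trivial factors to trivial factors and nontrivial factors to nontrivial factors. Restricting it to the nontrivial factors yields a bijection between the factors of $(\mathcal A)$ and those of $(\mathcal B)$, compatible with isomorphism; that is, $(\mathcal A)$ and $(\mathcal B)$ are equivalent. The main point requiring care is the bookkeeping of the trivial factors, together with the observation that maximality of the $H_i$ forces any refinement of a composition series to introduce only trivial new factors.
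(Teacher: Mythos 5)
Your argument is correct and follows exactly the route the paper takes: the paper derives Theorem \ref{jh} from Schreier's refinement theorem (Theorem \ref{schreier}) in a single sentence, observing that a lower composition series admits no proper refinement. Your write-up simply makes explicit the standard bookkeeping (repeated terms contribute factors isomorphic to $k$ by the dimension formula, and the equivalence of the refinements restricts to the nontrivial factors), which the paper leaves implicit.
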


Theorem \ref{jh} allows us to introduce the following invariants of a finite
dimensional Hopf algebra $H$.

\begin{definition} Let $H$ be a finite dimensional Hopf algebra and let  
$k = H_n \subseteq H_{n-1} \subseteq \dots \subseteq H_1 \subseteq H_0 = H$
be a lower composition series of $H$. The factors of the series  will be called
the \emph{lower composition factors of $H$} are the factors of $H$.
The number $n$ will be called the \emph{lower length of $H$}.
\end{definition}

Upper composition factors and upper length can be defined similarly. 

\begin{remark} Examples \ref{cocommutative} and \ref{dual-group} show
that the lower lengths of a Hopf algebra and its dual may be different. Compare
with Corollary \ref{dual-inv}.
 \end{remark}

\begin{proposition} Let $m$ be the length of $H$ and let $n$, $u$ be its lower
and upper lengths, respectively. Then $n \leq m$ and $u \leq m$. 
\end{proposition}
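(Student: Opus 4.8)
The plan is to express the length $m$ as a sum of lengths of the factors of a lower composition series, using additivity of length along exact sequences, and then derive $n\le m$ from the fact that each such factor is nontrivial; the bound $u\le m$ will then follow by duality.

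First I would fix a lower composition series
$$k = H_n \subseteq H_{n-1} \subseteq \dots \subseteq H_1 \subseteq H_0 = H,$$
with factors $\overline H_i = H_i/H_iH_{i+1}^+$, $0 \le i \le n-1$. Since $H$ is finite dimensional, each $H_i$ is free, hence faithfully flat, over the normal Hopf subalgebra $H_{i+1}$, so each step gives an exact sequence of Hopf algebras $k \to H_{i+1} \to H_i \to \overline H_i \to k$. Writing $\ell(-)$ for the length, Corollary \ref{additive} gives $\ell(H_i) = \ell(H_{i+1}) + \ell(\overline H_i)$ for every $i$. Telescoping these identities from $i=n-1$ down to $i=0$, and using $\ell(H_n) = \ell(k) = 0$, I obtain
$$m = \ell(H) = \sum_{i=0}^{n-1} \ell(\overline H_i).$$

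The key observation is that each factor is nontrivial. Indeed, the series is strictly decreasing, so $H_{i+1} \subsetneq H_i$ and $\dim \overline H_i = \dim H_i/\dim H_{i+1} > 1$; hence $\overline H_i \neq k$. Every nontrivial finite dimensional Hopf algebra has length at least $1$, so $\ell(\overline H_i) \ge 1$ for each $i$, and the displayed identity yields $m \ge n$, that is, $n \le m$.

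For the upper length I would pass to the dual. By Remark \ref{duality} the upper composition series of $H$ correspond to the lower composition series of $H^*$, so $u$ equals the lower length of $H^*$. Applying the inequality just established to $H^*$ gives that the lower length of $H^*$ is at most $\ell(H^*)$, and by Corollary \ref{dual-inv} we have $\ell(H^*) = \ell(H) = m$; hence $u \le m$. I expect no serious obstacle here: the only points needing care are the telescoping bookkeeping in the additivity step and the remark that strict decrease forces every factor to be a nontrivial Hopf algebra, which is precisely what keeps $n$ (and, dually, $u$) from exceeding $m$.
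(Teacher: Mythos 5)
Your proof is correct and follows essentially the same route as the paper: both rest on Corollary \ref{additive} applied to the exact sequences $k\to H_{i+1}\to H_i\to \overline H_i\to k$, the observation that each factor of a strictly decreasing series has length at least $1$, and duality (Corollary \ref{dual-inv} together with Remark \ref{duality}) for the bound $u\le m$. The only difference is presentational — you telescope the additivity identity along the whole series, whereas the paper peels off the top factor and inducts on $\dim H$ — which amounts to the same argument.
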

 
Note that all three lengths coincide if all lower (or upper) factors are simple
Hopf algebras. See Proposition \ref{simple-factors}. 
 
\begin{proof} We shall prove the inequality for the lower length.
This implies the inequality for the upper length in view of Corollary
\ref{dual-inv} and Remark \ref{duality}. 
The proof is by induction on the dimension of $H$. If $\dim H =
1$, there is nothing to prove. 
Suppose that $\dim H > 1$ and let $k = H_n \subseteq \dots \subseteq H_1
\subseteq H_0 = H$ be a lower composition series of $H$. Then $H_1$ is a normal
Hopf subalgebra and $\overline H_0 = H/H{H_1}^+$ is a composition factor
of $H$. It follows from Corollary \ref{additive} that the length $m$ of
$H$ equals $m_1+m'_1$, where $m'_1 \geq 1$ is the length of $\overline H_0$. 
Observe that $k = H_n \subseteq \dots \subseteq H_1$ is a lower composition
series of $H_1$, hence the lower length of $H_1$ equals $n-1$. The inductive
assumption implies that $n-1 \leq m_1$.
Then $m = m_1 + m'_1 \geq n -1 + m'_1 \geq n$. 
\end{proof}

 \begin{example}\label{fl-kernel} Keep the notation in Example \ref{flker-max}.
Suppose $\mathfrak{g}$ is a simple Lie algebra of rank $n$. Let $A =
(a_{ij})_{i,j}$ be the Cartan matrix of $\mathfrak{g}$ and let $D =
(d_1, \dots, d_n)$ be a diagonal matrix with relatively prime entries such that
$DA =AD$.

Let $\mathfrak{j} \cong k \mathbb Z_2^{(n)}$ be the central Hopf
subalgebra of $\mathfrak u$ considered in \cite[p. 27]{notes-ext}. Let us denote
$\overline{\mathfrak u} = \mathfrak{u}/\mathfrak{u}\mathfrak{j}^+$. 
Suppose that $\ell$ and the
determinant of the matrix $(d_ia_{ij})_{ij}$ are relatively prime (which is
always the case if $\mathfrak{g}$ is not of type $A$). Then 
$\overline{\mathfrak u}$ is a simple Hopf algebra
\cite[Appendix]{notes-ext}. 
Then $k \subseteq k \mathbb Z_2 \subseteq \dots \subseteq k \mathbb
Z_2^{(n-1)} \subseteq \mathfrak{j} \subseteq \mathfrak{u}$
is a lower composition series of $\mathfrak{u}$ with factors $k\mathbb{Z}_2$
(with multiplicity $n$) and  $\overline{\mathfrak u}$. Since all lower
composition factors are simple Hopf
algebras, then these are also the composition factors of $\mathfrak u$.
Hence, in this case, the  length of $\mathfrak{u}$  is $n+1$ and it
coincides with its lower and upper lengths.   \end{example}
 
\begin{remark}\label{adm-factors}  Let  \eqref{lowerseries} be a lower
subnormal series of $H$ with factors $\overline{H_i}$. 
Then $H_i$ is isomorphic as a
Hopf algebra to a
bicrossed product $H_i \cong H_{i+1}\#\overline H_i$.
Therefore $H = H_0$ can be obtained from $H_{n-1} = \overline H_{n-1}$
through an iterated sequence of bicrossed products by the factors of the series:
$H \cong ((\dots (\overline H_{n-1} \# \overline H_{n-2})\# \dots) \#
\overline H_0)$.
In view of Remark
\ref{ss-coss}, we get that $H$ is semisimple (respectively, cosemisimple)
if and only if all its lower composition factors  are semisimple (respectively,
cosemisimple).  Compare with Proposition \ref{ss-invfactors}.
\end{remark}

\begin{example}\label{ej-ab} Let $\Gamma$ and $F$ are finite groups. Consider an
\emph{abelian} exact sequence
\begin{equation}\label{ab-exact}k \longrightarrow k^\Gamma \longrightarrow H
\longrightarrow kF \longrightarrow k.
\end{equation}
 It is known that \eqref{ab-exact}
gives rise to mutual actions by permutations $\lhd: \Gamma \times F \to \Gamma$
and $\rhd: \Gamma \times F \to F$ that make $(F, \Gamma)$ into a \emph{matched
pair of groups}. 
Moreover, there exist invertible normalized 2-cocycles $\sigma:F
\times F \to k^\Gamma$ and $\tau:\Gamma \times \Gamma \to k^F$, which are
compatible in an appropriate sense, such that $H \cong k^\Gamma {}^\tau\#_\sigma
kF$ is a bicrossed product. See \cite{masuoka}.

For every $s\in \Gamma$, let $e_g \in k^\Gamma$ be defined by $e_s(t) =
\delta_{s, t}$, $t \in \Gamma$.  Then $(e_s\# x)_{s \in \Gamma, x \in F}$ is a
basis of $k^\Gamma {}^\tau\#_\sigma kF$ and, in this basis, the multiplication,
comultiplication and antipode of $H$ are given, respectively, by the formulas 
$$(e_g \# x)(e_h \# y)  = \delta_{g \lhd x, h}\,
\sigma_g(x, y) e_g \# xy, \quad  \Delta(e_g \# x)  = \sum_{st=g} \tau_x(s, t)\,
e_s \# (t \rhd x) \otimes e_{t}\# x,$$
$$\mathcal S(e_g \# x) = \sigma_{(g\lhd x)^{-1}}((g\rhd x)^{-1}, g\rhd x)^{-1}
\, \tau_x(g^{-1}, g)^{-1} \, e_{(g\lhd x)^{-1}} \# (g\rhd x)^{-1},$$
for all $g,h\in \Gamma$, $x, y\in F$, where $\sigma_g(x, y) =
\sigma(x, y)(g)$ and $\tau_x(g, h) = \tau(g, h)(x)$.

Furthermore, the exact sequence \eqref{ab-exact} is equivalent to the sequence
$k \longrightarrow k^\Gamma \overset{i}\longrightarrow k^\Gamma \,
{}^{\tau}\#_{\sigma}kF
\overset{\pi}\longrightarrow kF \longrightarrow k$, where $i(e_g) = e_g \otimes
1$, $g \in \Gamma$, and
$\pi = \epsilon \otimes \id$.

 A subnormal series $\{e\} = F_m \subseteq \dots \subseteq F_0 = F$ of
$F$ will be called a \emph{$\Gamma$-subnormal series} if every term $F_i$ is a
$\Gamma$-stable subgroup of $F$. By a \emph{$\Gamma$-composition series} of $F$
we shall understand a $\Gamma$-subnormal series which does not admit a proper
refinement consisting of $\Gamma$-stable subgroups. 

\begin{proposition}\label{cs-abext} Let $\{e\} = \Gamma_n \subseteq \dots
\subseteq \Gamma_0 =
\Gamma$ be a principal series of $\Gamma$ and let 
$\{e\} = F_m \subseteq \dots \subseteq F_0 = F$ be a $\Gamma$-composition series
of $F$. Then 
\begin{equation}\label{cs-ab}k \subseteq k^{\Gamma/\Gamma_1} \subseteq \dots
\subseteq  k^\Gamma \subseteq 
k^\Gamma {}^\tau\#_\sigma kF_{m-1} \subseteq \dots \subseteq k^\Gamma
{}^\tau\#_\sigma kF_1 \subseteq H
\end{equation}
is a lower composition series of $H$. 
\end{proposition}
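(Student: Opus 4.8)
The plan is to read \eqref{cs-ab} as two segments joined at the term $k^\Gamma = k^\Gamma\,{}^\tau\#_\sigma kF_m$ (note $F_m=\{e\}$), and to verify that each consecutive inclusion is a maximal normal inclusion of Hopf subalgebras, which is exactly the requirement for a lower composition series. Faithful (co)flatness is automatic throughout since $H$ and all the subalgebras in sight are finite dimensional (Remark \ref{suff-iii}).

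For the bottom segment $k = k^{\Gamma/\Gamma_0} \subseteq k^{\Gamma/\Gamma_1} \subseteq \dots \subseteq k^{\Gamma/\Gamma_n} = k^\Gamma$, I would invoke Example \ref{dual-group}: the Hopf subalgebras of the commutative Hopf algebra $k^\Gamma$ are exactly the $k^{\Gamma/S}$ with $S$ a normal subgroup of $\Gamma$, all of them normal, with $k^{\Gamma/S}\subseteq k^{\Gamma/S'}$ if and only if $S'\subseteq S$. Since $\{e\}=\Gamma_n\subseteq\dots\subseteq\Gamma_0=\Gamma$ is a principal series, there is no normal subgroup of $\Gamma$ strictly between consecutive $\Gamma_j$, hence no Hopf subalgebra of $k^\Gamma$ strictly between $k^{\Gamma/\Gamma_{j-1}}$ and $k^{\Gamma/\Gamma_j}$; each such inclusion is therefore maximal normal, and the bottom segment is a lower composition series of $k^\Gamma$ with factors the dual chief factors $k^{\Gamma_{j-1}/\Gamma_j}$.

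For the top segment I would first check, using the explicit formulas for $\Delta$ and $\mathcal S$ in Example \ref{ej-ab}, that $\Gamma$-stability of $F_i$ makes $k^\Gamma\,{}^\tau\#_\sigma kF_i$ a Hopf subalgebra of $H$: the comultiplication of $e_g\#x$ with $x\in F_i$ only involves elements $t\rhd x\in F_i$, and similarly for the antipode. The heart of the argument is a correspondence obtained from the isomorphism theorems (Theorem \ref{3d-it}, together with $k^\Gamma = {}^{\co\pi}(k^\Gamma\,{}^\tau\#_\sigma kF_{i-1})$): the Hopf subalgebras $A$ with $k^\Gamma\subseteq A\subseteq k^\Gamma\,{}^\tau\#_\sigma kF_{i-1}$ are precisely the $k^\Gamma\,{}^\tau\#_\sigma kN$ for $N$ a $\Gamma$-stable subgroup of $F_{i-1}$ (the same coproduct computation forcing $N$ to be $\Gamma$-stable), and such $A$ is normal in $k^\Gamma\,{}^\tau\#_\sigma kF_{i-1}$ exactly when $N$ is normal in $F_{i-1}$. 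Granting this, $k^\Gamma\,{}^\tau\#_\sigma kF_i$ is normal in $k^\Gamma\,{}^\tau\#_\sigma kF_{i-1}$ since $F_i\trianglelefteq F_{i-1}$; and if $A$ were a normal Hopf subalgebra with $k^\Gamma\,{}^\tau\#_\sigma kF_i\subsetneq A\subsetneq k^\Gamma\,{}^\tau\#_\sigma kF_{i-1}$, then (as $A\supseteq k^\Gamma$) it would yield a $\Gamma$-stable subgroup $N$ with $F_i\subsetneq N\subsetneq F_{i-1}$ and $F_i\trianglelefteq N\trianglelefteq F_{i-1}$, a proper $\Gamma$-stable refinement, contradicting that $\{e\}=F_m\subseteq\dots\subseteq F_0=F$ is a $\Gamma$-composition series. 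Hence every top inclusion is maximal normal, and by the third isomorphism theorem its factor is the group algebra $k(F_{i-1}/F_i)$ of a $\Gamma$-chief factor.

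Finally I would splice the two parts: $k^\Gamma$ is the kernel of $\pi$, hence a normal Hopf subalgebra of $H$ and a fortiori normal in every intermediate $k^\Gamma\,{}^\tau\#_\sigma kF_i$, so the inclusions across the junction are already covered, and \eqref{cs-ab} is a strictly decreasing lower subnormal series every step of which is maximal normal, i.e.\ a lower composition series of $H$. I expect the main obstacle to be the correspondence step for the top segment: converting stability under the adjoint action of the bicrossed product into normality and $\Gamma$-stability of the subgroup $N$, carried out \emph{without} assuming simplicity of the factors. Indeed the factors $k(F_{i-1}/F_i)$ need not be simple Hopf algebras, since the $\Gamma$-chief factors $F_{i-1}/F_i$ need not be simple groups, so Corollary \ref{adm-max} does not apply directly and maximality must be deduced through the subgroup correspondence instead.
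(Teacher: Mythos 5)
Your proposal is correct and follows essentially the same route as the paper: Example \ref{dual-group} handles the bottom segment via the principal series of $\Gamma$, and maximality in the top segment is deduced by passing a normal intermediate Hopf subalgebra $L \supseteq k^\Gamma$ through the projection $\pi = \epsilon \otimes \id$ to a $\Gamma$-stable normal subgroup of $F_{i-1}$ and invoking maximality of the $\Gamma$-composition series. The only presentational difference is that the paper realizes your "correspondence step" concretely by the dimension count $\dim L = \dim L^{\co \pi}\dim \pi(L) = |\Gamma|\,|\tilde F|$, which forces $L = k^\Gamma\,{}^\tau\#_\sigma k\tilde F$ once $\tilde F$ is identified, rather than establishing a full lattice bijection.
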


\begin{proof} Since the subgroups $F_i$ are $\Gamma$-stable, then
$(\Gamma, F_i)$ is a matched pair by restriction and $k^\Gamma {}^\tau\#_\sigma
kF_i$ can be identified with a Hopf subalgebra of $H$. Moreover,
since $F_i$ is normal in $F_{i-1}$, then $k^\Gamma
{}^\tau\#_\sigma kF_i$ is a normal Hopf subalgebra of $k^\Gamma {}^\tau\#_\sigma
kF_{i-1}$, for all $i =1, \dots, m$.
It was already observed (Example \ref{dual-group}) that  
$k^{\Gamma/\Gamma_i}$
is a maximal normal Hopf subalgebra of $k^{\Gamma/\Gamma_{i+1}}$, for all $i =
0, \dots, n-1$. Let $1 \leq i \leq m$, and suppose that $L$ is a normal Hopf
subalgebra of $k^\Gamma {}^\tau\#_\sigma kF_{i-1}$ such that $k^\Gamma
{}^\tau\#_\sigma kF_i \subsetneq L \subseteq k^\Gamma {}^\tau\#_\sigma
kF_{i-1}$. 
Under the canonical projection $\pi = \epsilon \otimes \id: H \to kF$, we
have $\pi(L) \subseteq \pi(k^\Gamma {}^\tau\#_\sigma kF_{i-1}) = kF_{i-1}$.
Let $\tilde F$ be a subgroup of $F_{i-1}$ such that $\pi(L) = k\tilde F$. Then
$\dim L = \dim L^{\co \pi} \dim \pi(L) = |\Gamma| |\tilde F|$.
Since $L$ is a normal in $k^\Gamma {}^\tau\#_\sigma kF_{i-1}$
and $k^\Gamma \subseteq L$, then $\tilde F$ is
a $\Gamma$-stable normal subgroup of $F_{i-1}$. 
Notice that $kF_i = \pi (k^\Gamma {}^\tau\#_\sigma kF_i) \subseteq k\tilde F$,
hence $F_i \subseteq \tilde F$ and $F_i \neq \tilde F$, because
$|\Gamma| |\tilde F| = \dim L  > \dim k^\Gamma {}^\tau\#_\sigma kF_i = |\Gamma|
|F_i|$.  
By maximality of the series $(F_i)_i$, we get $\tilde F =F_{i-1}$. As before,
this implies that $\dim L =
\dim k^\Gamma {}^\tau\#_\sigma kF_{i-1}$ and then $L = k^\Gamma {}^\tau\#_\sigma
kF_{i-1}$.  
This shows that $k^\Gamma {}^\tau\#_\sigma kF_i$ is a maximal normal Hopf
subalgebra of $k^\Gamma {}^\tau\#_\sigma kF_{i-1}$. Then 
\eqref{cs-ab} is a lower composition series of $H$, as claimed.
\end{proof}

\begin{remark} Recall that the length of a finite
dimensional Hopf algebra is additive with respect to exact sequences; see
Corollary \ref{additive}.
In contrast with this situation,  
Proposition \ref{cs-abext} provides nontrivial examples of exact sequences of
Hopf algebras 
$k \longrightarrow H' \longrightarrow H \longrightarrow H'' \longrightarrow
k$,
such that the lower length of $H$ is different from the sum of the lower lengths
of
$H'$ and
$H''$.  Moreover, a lower composition factor of $H$ needs not be a lower
composition factor of $H'$ or of $H''$.
 \end{remark}
\end{example}

\begin{example} Suppose that \eqref{ab-exact} is a \emph{cocentral} exact
sequence 
(equivalently, that the action $\rhd: \Gamma \times F \to F$ is trivial). Then a
$\Gamma$-composition series of $F$ is the same as a composition series of $F$.
Hence the lower composition factors of $H$ are
the principal factors of $\Gamma$ and the composition factors of $F$.

An example of a cocentral abelian extension is given by the Drinfeld double
$D(G)$ (see Example \ref{ddoble}). As a
Hopf algebra, $D(G)$ is a bicrossed product $D(G) = k^G \# kG$, with respect to 
the trivial action $\rhd: G \times G \to G$ and the adjoint action $\lhd: G
\times G
\to G$, where $\sigma = 1$, $\tau = 1$.
Therefore the lower composition factors of $D(G)$ are the dual group algebras of
the principal factors
of $G$ and the group algebras of the composition factors of $G$. In
particular, the lower length of $D(G)$ equals the sum of the principal length
and the length of $G$.
On the other hand, the dual Hopf algebra $D(G)^*$ is a bicrossed product $k^G\#
kG$, with respect to the adjoint action $\rhd: G \times G \to G$ and the trivial
action $\lhd: G \times G \to G$. In this case, a $G$-composition
series of $G$ is the same as a normal series of $G$. Hence the upper
composition factors of $D(G)$ are the dual group
algebras and the group algebras of the principal factors of $G$; see Remark
\ref{duality}. In particular, if $G$ is any finite group whose composition length is
different from its principal length, then $D(G)$ provides an
example of a Hopf algebra for which the lower length, the upper length and the 
length are pairwise distinct.
\end{example}

\subsection{On lower composition series of normal right coideal
subalgebras}\label{ncs}

Let $H$ be a finite dimensional Hopf algebra over $k$. The underlying algebra of
 $H$ is
an algebra in the category $\mathcal{YD}^H_H$ of Yetter-Drinfeld modules over
$H$
with respect to the right adjoint
action and the right coaction given by the comultiplication. 
We shall use the notation $\mathcal H$ to indicate this structure.
Observe that the subalgebras of $\mathcal H$ in $\mathcal{YD}^H_H$ are exactly
the right
normal right coideal subalgebras of $H$.
By \cite{skryabin} $H$ is
free over its coideal subalgebras. Hence, by
\cite[Theorem 3.2]{takeuchi}, the maps $K \mapsto H/HK^+$,  $\overline H
\mapsto  ^{\co \overline
H}\!H$, determine inverse bijections between the following two sets:
\begin{itemize}\item[(a)] Right normal right coideal subalgebras $K
\subseteq H$. \item[(b)] Quotient Hopf algebras $H \to
\overline H$. \end{itemize}
Since every tensor subcategory of $\Rep H$ is of the form $\Rep \overline H$,
for some quotient Hopf algebra $\overline H$, it follows that the map $A \mapsto
\Rep H/HA^+$ determines an
anti-iso\-morphism between the following lattices:
\begin{itemize}\item[(a')] Yetter-Drinfeld subalgebras $A$ of $\mathcal H$.
\item[(b')] Tensor  subcategories of $\Rep H$.\end{itemize}                     
                      
Let us consider a maximal series  of algebras in $\mathcal{YD}^H_H$
(\textit{i.e.}, normal right coideal subalgebras of $H$):
$k = A_{n} \subseteq A_{n-1} \subseteq \dots
\subseteq A_1 \subseteq A_0 =
\mathcal H$. 
This corresponds to a maximal series
of tensor subcategories $\vect_k = \Rep H/HA_0^+ 
\subseteq \Rep H/HA_1^+ \subseteq \dots \subseteq \Rep H/HA_n^+ = \Rep H$.
The following example shows that these series fail to admit a Jordan-H\" older
theorem.

\begin{example}
Let $G$ be a finite group and consider the Hopf algebra $H =
k^G$. Every right coideal subalgebra of $k^G$ is of the form $k^{H \backslash
G}$,  where $H$ is a subgroup of $G$ and $H \backslash G = \{Hg:\, g \in G\}$
is the space of right cosets of $G$ modulo $H$.
Then a maximal series of right coideal subalgebras corresponds to a maximal
series of (not necessarily normal) subgroups $\{e\}= G_0 \subsetneq G_1
\subsetneq \dots \subsetneq G_n = G$. 

Take for instance $G = \A_5$. Then the following are maximal series of
subgroups:
$\{e\} \subsetneq \langle   (123) \rangle
\subsetneq \langle   (123), (12)(45) \rangle \subsetneq \A_5$,
and $\{e\} \subsetneq \langle   (12)(34) \rangle \subsetneq  \langle   (12)(34),
(14)(23) \rangle$ $\subsetneq \langle   (123),
(12)(34) \rangle \subsetneq  \A_5$.
In the first case, we have $\langle   (123),
(12)(45) \rangle \cong \mathbb S_3$, while in the second case, $\langle   (123),
(12)(34)
\rangle \cong \A_4$ and $\langle   (12)(34), (14)(23) \rangle \cong \mathbb
Z_2\times \mathbb Z_2$.
We observe in this example that not even the length of a maximal series
of normal right coideal subalgebras is a well-defined invariant of $H$.
\end{example}

\bibliographystyle{amsalpha}

\end{document}